\documentclass[12pt]{article} %[bp-fair.tex 02.06.24]  %,a4paper
\usepackage{amsfonts,amssymb} 
\usepackage[cp1251]{inputenc}%\usepackage[english,russian]{babel}

\sloppy %  works until \fussy %\emergencystretch=5pt
\usepackage[margin=25mm, a4paper]{geometry}  % journal quality

       %start of proof
\def\blanksquare{\,\,\,$\sqcup\!\!\!\!\sqcap$}         %blank  square
\def\qed{\hfill\blanksquare\linebreak\smallskip\par}   %end of proof
                                 %==>

\def\thname{Theorem}  \def\lmname{Lemma}    \def\prname{Proposition}
\def\dfname{Definition}  \def\crname{Corollary}  \def\rmname{Remark}
\def\exname{Example}   \def\conname{Conjecture}

\newtheorem{theorem}{\thname}[section]   %Numbering: Theorem--Other section
\newtheorem{lemma}{\lmname}[section]     %{lemma}[theorem]{Lemma}   subsection
 %lemma
\newtheorem{corollary}[lemma]{\crname}   %lemma

 %lemma

\newtheorem{dftn}{\dfname}[section]
\newenvironment{definition}{\begin{dftn}\rm}{\end{dftn}} %section

\newtheorem{rmrk}[lemma]{\rmname}
     %lemma

\def\bea#1{\begin{eqnarray*} #1 \end{eqnarray*}} \def\a{\!\!\!&\!\!\!\!&}
   %\bea{f(x) \a= 2.34 \cdot x \\ \a= z+x^2} -- Example

\def\url#1{#1}

%%%%%%%%%%%%%%%%%%%%%%%%%%%%%%
\begin{document}

\title{Elementary solution to the fair division problem}
\author{Michael Blank and Maxim Polyakov\thanks{
        Institute for Information Transmission Problems RAS
        (Kharkevich Institute);}
        \thanks{National Research University ``Higher School of Economics'';} %\hskip6cm
        \thanks{e-mail: blank@iitp.ru, pmaxol73@gmail.com}
       }
\date{May 2, 2024} %\today} %

\maketitle

\begin{abstract}
A new and relatively elementary approach is proposed for solving 
the problem of fair division of a continuous resource (measurable space, 
pie, etc.) between several participants, the selection criteria 
of which are described by charges (signed measures). 
The setting of the problem with charges is considered for 
the first time. The problem comes down to analyzing the properties 
of the trajectories of a specially constructed dynamical system 
acting in the space of finite measurable partitions. Exponentially 
fast convergence to a limit solution is proved for both the 
case of true measures and the case of charges.
Key words: {fair division, mathematical economics, multicriteria optimization, 
countably additive measures/charges, dynamical systems}.
\end{abstract}

\section{Introduction}
Problems associated with ``optimal'' division of resources under a given system 
of constraints are one of the most basic problems of mathematical economics.
Their fundamental complexity is determined by the large number (usually)
conflicting criteria for ``optimality'' of the division. The simplest example here
is the question of dividing a highly heterogeneous pie (treasure, apartment, 
etc.) between several participants, whose assessments of the parts they 
obtain may essentially differ from each other.

Thus, the fair division problem belongs to the class of multicriteria 
optimization problems; however, this problem has several important distinct 
features.
The main one is that instead of searching for an ``optimal'' (in one sense 
or another) solution (which may not exist), it is proposed to search for 
an acceptable solution. The latter setting in most cases turns out to 
be much simpler.
In a sense, this approach is a far generalization of the search for 
a base point (one of the vertices of a simplex) in the classical linear 
programming problem.

The problem of fair division in the simplest case is formulated as follows.
Suppose that we need to divide a pie among $r$ people so that everyone believes 
that he was not deprived. At the same time, the pie is highly heterogeneous, 
and each participant has his own criteria for comparing different pieces 
of the pie.
For example, someone likes a piece with strawberries better, while someone 
else notices that the left side of the pie is slightly burnt, etc.
The question arises of whether it is always possible to do this? 
Despite its seemingly “school-like” nature, this task turns out to be 
very difficult and a huge number of publications  was devoted to it in 
various settings 
(reviews of known results can be found, e.g., in \cite{BJK,Mo,Br,BBS}).

The first rigorous mathematical results for the case when the participants' 
criteria are determined by non-atomic countably additive measures,
were obtained in ~1961 by Dubins and Spanier \cite{DS}, who, relying on 
the results of Lyapunov \cite{La} and Halmos \cite{Ha} on the convexity 
property for vector measures, proved that for any set of criteria of the 
participants in the division (represented by non-atomic measures) 
the pie can be cut ``fairly''.
In 1980, W. Stromquist \cite{Str1} proved that this can be done in exactly 
$r-1$ cuts. However, this proof (as well as the result of \cite{DS})
is not constructive, and an explicit algorithm for such division is 
known only for $r<4$.
Much later, in 1999, F.E. Su \cite{Su} developed a fundamentally different 
combinatorial-topological approach to solve this problem,
based on an iterative exponentially fast converging algorithm.

Since we will be interested in approaching the fair division problem from 
the point of view of abstract measure theory, we do not consider numerous 
important and interesting geometric aspects of this problem such 
as representing a resource as a subset of Euclidean space, the possibility 
of partitioning by hyperplanes, or restricting partitions to connected 
sets only. The reader can find references to relevant results 
in the above reviews.

Until recently, it was believed that there was no final algorithm for 
solving the fair division problem (under some ``natural'' technical 
assumptions\footnote{The resource is represented by a unit segment, 
   and the partition elements are intervals.}) (see, for example, \cite{Str2}) .
However, in 2016, H.~Aziz and S.~McKenzie \cite{AM} proposed a finite 
algorithm. The algorithm is technically very complex, and the estimate 
$r^{r^{r^{r^{r^r}}}}$ for the number of steps required to construct 
a solution is astronomically large even for a small number of participants.

In real life, criteria for participants can naturally turn out 
to be much more complex and not reducible to classical measures. 
A complete study of functionals that allow the solution of the fair 
division problem has not been completed to date. The only progress 
here is related to the situation where it is not the ``best'' but 
the worst parts are chosen. Formally, in terms of measure theory,  
this means that the corresponding criterion is equal to some measure 
with a minus sign. A version of the finite Aziz-McKenzie algorithm for 
this case is proposed in \cite{DFHY}. 
Oddly enough, the combination of these options, i.e., the situation where  
the criteria are described by charges (signed measures), has not been 
studied in the mathematical literature.\footnote{As the reviewer 
pointed out to us, a particular example for 3 participants was considered 
in \cite{SH}.}
A possible explanation is that the authors were unable to reconcile 
in one algorithm the contradictions that arise in approaches with positive
and negative measures.

In this work, we propose a fairly simple exponentially fast convergent 
algorithm for solving the fair division problem for the case where  
criteria of participants are described by non-atomic countably 
additive charges.

Due to the non-atomicity of the functionals under consideration, 
various parts of the measurable space in a whole (for all participants 
simultaneously)  seem to be homogeneous.
Therefore, at first glance, our solution looks somewhat paradoxical.
The idea is to successively select disjunctive parts of the space on 
each of which it is possible to present an exact solution. 
The total remainder measure turns out to be exponentially small 
(relative to the step number of the division process),
which guarantees exponentially fast convergence of the process.

The paper is organized as follows. Section~2 provides a formal 
mathematical formulation of the problem; after that, facts 
on which the given algorithm is based are presented, and the 
main ideas of its construction are demonstrated in cases with 
a small number of participants. The last two sections describe 
the algorithm for constructing a strong solution for measures 
and its generalization to the case of charges.

\section{Formulation of the problem. Strong and weak solutions.}

Let $(M,\Sigma)$ be a measurable space, that is, a pair of a set $M$ 
and a $\sigma$-algebra of its measurable subsets $\Sigma$. Let us fix 
a natural number $r$, a set $\{\mu_i\}_{i=1}^{r}$ of functionals 
$\mu_i\colon \Sigma \to \mathbb{R}$,
and a measurable partition $\mathcal{F}:=\{F_i\}_{i=1}^{r}$ of the set $M$.

\begin{definition}
In the fair division problem, we call a partition $\mathcal{F}$
\begin{itemize}
\item\vskip-5pt
a \emph{\/ weak} solution, if 
    $\mu_i(F_i)\ge\frac{1}{r}\sum\limits_{j=1}^r\mu_i(F_j)$ for all $i$;
\item
a \emph{\/ strong}solution, if $\mu_i(F_i)\ge\mu_i(F_j)$ for all $i,j$.
\end{itemize}
\end{definition}

A weak solution to the problem corresponds to the case where the $i$-th 
participant has gained on average, relative to the functional $\mu_i$, 
no less than the others. A strong solution corresponds to the case where  
everyone believes that any other participant has gained no more than himself.

Note that in the literature on economics, a weak solution is often referred 
to as a ``proportional'', and a strong solution as an ``envy-free'' solution. 
In mathematics context, our terminology (first proposed by 
M.~Blank in \cite{Bl}) seems more convenient and adequate. Conceptually, 
this terminology goes back to the classical notions of weak and strong 
solutions in theory of differential equations.

\begin{lemma}
Let\/  $\mathcal{M}$ be a class of functionals for which the fair division 
problem has a solution. Then for any $r$ and any $\mu\in\mathcal{M}$
there is a partition of $M$ into $r$ measurable subsets 
$\left\{F_i\right\}_{i=1}^r$\textup   
such that \/ $\mu(F_i)=\mu(F_j)$, $\forall i,j$.
\end{lemma}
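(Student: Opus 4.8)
The plan is to exploit the symmetry of the situation in which all $r$ participants happen to have the \emph{same} criterion. First I would fix $r$ and $\mu\in\mathcal M$ and apply the defining property of $\mathcal M$ to the tuple $\mu_1=\mu_2=\dots=\mu_r=\mu$: by hypothesis the fair division problem for this tuple admits a solution, i.e. a measurable partition $\mathcal F=\{F_i\}_{i=1}^r$ of $M$. The whole point is then that, once the criteria coincide, ``being a solution'' collapses to an equipartition statement.

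Suppose first that $\mathcal F$ is a strong solution. By definition $\mu_i(F_i)\ge\mu_i(F_j)$ for all $i,j$, which, since every $\mu_i$ equals $\mu$, reads $\mu(F_i)\ge\mu(F_j)$ for every ordered pair $(i,j)$. Interchanging the roles of $i$ and $j$ in this same inequality gives $\mu(F_j)\ge\mu(F_i)$, and the two together force $\mu(F_i)=\mu(F_j)$ for all $i,j$, which is exactly the claim. Observe that this argument uses nothing about $\mu$ beyond the inequalities themselves — not even additivity.

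If instead only a weak solution is guaranteed, one extra observation is needed. From the weak-solution inequality $\mu(F_i)\ge\frac1r\sum_{j=1}^r\mu(F_j)=\frac1r\mu(M)$ — where I use that $\mu$ is finitely additive, so $\sum_{j=1}^r\mu(F_j)=\mu(M)$, which is automatic for charges — summing over $i$ yields $\mu(M)=\sum_{i=1}^r\mu(F_i)\ge\mu(M)$. Hence each of the $r$ inequalities must in fact be an equality, so $\mu(F_i)=\frac1r\mu(M)$ for every $i$, and in particular $\mu(F_i)=\mu(F_j)$ for all $i,j$.

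I do not anticipate a genuine obstacle here; the only point requiring care is the reading of the hypothesis. The phrase ``the fair division problem has a solution for functionals in $\mathcal M$'' must be understood to include the degenerate tuples $(\mu,\dots,\mu)$ with a single repeated functional — which is precisely what makes the symmetry argument run. (If one insisted that the criteria be pairwise distinct, a small perturbation/approximation argument would be needed to recover the conclusion, but that is not the intended setting.)
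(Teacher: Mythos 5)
Your proposal is correct and follows essentially the same route as the paper: apply the hypothesis to the degenerate tuple $\mu_1=\dots=\mu_r=\mu$ and observe that either the strong or the weak condition then forces $\mu(F_i)=\mu(F_j)$ for all $i,j$ (in the weak case one can even avoid invoking additivity, since summing $\mu(F_i)\ge\frac1r\sum_j\mu(F_j)$ over $i$ already turns every inequality into an equality). No gap; your treatment of the weak case is simply a more explicit version of the step the paper leaves implicit.
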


\begin{proof}
For the existence of any of the above types of solutions to the fair 
division problem, a necessary requirement is the presence of 
a corresponding solution for the case when all functionals $\mu_i$ 
coincide and are equal to ~$\mu$. In this case, the conditions for 
the weak and strong solutions are as follows:
\begin{itemize}
\item\vskip-5pt
$\mu(F_i)\ge\frac{1}{r}\sum\limits_{j=1}^r\mu(F_j)$, $\forall i$;
\item
$\mu(F_i)\ge\mu(F_j)$, $\forall i,j$.
\end{itemize}\vskip-5pt

Each of these conditions imply that $\mu(F_i)=\mu(F_j)$ for all $i,j$. 
Thus, there is a measurable partition of the set $M$ into $r$ parts $F_i$ 
that are equal with respect to the functional $\mu$.
\end{proof}

An important question is sufficiency of this condition for 
different types of the problem setting. A full answer to this 
question remains open, but the following theorem provides the most 
complete solution for today.

\begin{theorem}\label{t1}
For any non-atomic countably additive charges \/ 
$\left\{\psi_i\right\}_{i=1}^r$  there is
an explicit constructive algorithm for finding a strong solution to 
the fair division problem.
\end{theorem}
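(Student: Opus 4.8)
\noindent The plan is to produce the required partition as the limit of the trajectory of a map $T$ acting on the space of finite measurable partitions of $M$, more precisely on pairs $(\mathcal{G};R)$, where $R\in\Sigma$ is an ``unallocated remainder'' and $\mathcal{G}=(G_1,\dots,G_r)$ is a partial allocation of $M\setminus R$ among the participants. Starting from $(\emptyset,\dots,\emptyset;M)$ one shows that the trajectory $T^k(\emptyset,\dots,\emptyset;M)$ converges at an exponential rate in total variation to a partition $\{F_i\}_{i=1}^r$ that is in fact an \emph{exact} solution, $\psi_i(F_i)=\psi_i(F_j)$ for all $i,j$, hence a fortiori a strong one. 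I would first carry this out for genuine non-negative measures (Section~3 in the plan of the paper) and then add the sign bookkeeping needed for charges.

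\noindent The heart of the matter is a single-step building block, which I would isolate and prove first: for non-atomic charges $\psi_1,\dots,\psi_r$ and any $R\in\Sigma$ with $v(R):=\sum_{i}|\psi_i|(R)>0$, one can \emph{explicitly} construct a set $D\subseteq R$ and a partition $D=\bigsqcup_{i=1}^{r}D_i$ with $\psi_i(D_i)=\psi_i(D_j)$ for all $i,j$ and $v(R\setminus D)\le\theta\,v(R)$ for some fixed $\theta=\theta(r)<1$. To build $D$, pass to the common refinement of the $r$ Hahn decompositions of $R$ with respect to $\psi_1,\dots,\psi_r$; this splits $R$ into at most $2^r$ cells on each of which every $\psi_i$ is of constant sign, so that on a given cell the participants split into those who want to \emph{receive} mass there and those who want to \emph{shed} it. Non-atomicity provides the tuning freedom that makes this explicit: on any measurable $E$ a participant can carve off a subset of any prescribed $\psi_i$-value in the whole range $[-\psi_i^-(E),\psi_i^+(E)]$ (the intermediate-value property replacing Lyapunov convexity), and a finite adjustment/bisection procedure, run cell by cell, equalizes the $r$ values simultaneously on a sub-domain of definite size, reconciling the ``receiving'' and the ``shedding'' participants within each cell. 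Assembling the per-cell sub-domains yields $D$, and controlling how much total variation is unavoidably left out in the worst cell gives the uniform bound with a suitable $\theta(r)<1$.

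\noindent Granting the building block, the rest is routine. Define $T(\mathcal{G};R):=(\mathcal{G}\cup\{D_1,\dots,D_r\};\,R\setminus D)$ with $D,D_i$ produced by the building block applied to $R$. Iterating from $(\emptyset,\dots,\emptyset;M)$ gives a decreasing chain of remainders $R_0=M\supseteq R_1\supseteq\cdots$ with $v(R_k)\le\theta^{k}v(M)$, hence $R_\infty:=\bigcap_{k}R_k$ satisfies $\psi_i(R_\infty)=0$ for every $i$. Put $F_i:=\bigcup_{k\ge1}D_{k,i}$ for $i<r$ and $F_r:=M\setminus\bigcup_{i<r}F_i$ (so $R_\infty\subseteq F_r$, which does no harm). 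Since $|\psi_i(D_{k,i})|\le v(R_{k-1})\le\theta^{k-1}v(M)$, the series $\psi_i(F_i)=\sum_{k\ge1}\psi_i(D_{k,i})$ converges absolutely; and because every step is exactly equalizing, $\psi_i(F_i)=\sum_{k}\psi_i(D_{k,i})=\sum_{k}\psi_i(D_{k,j})=\psi_i(F_j)$ for all $i,j$, the null set $R_\infty$ contributing nothing. In particular $\psi_i(F_i)\ge\psi_i(F_j)$ for all $i,j$, so $\{F_i\}_{i=1}^r$ is a strong solution, obtained by an explicit and exponentially convergent procedure.

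\noindent The only genuine obstacle is the first step: upgrading the mere \emph{existence} of an equalizing sub-partition (which follows softly from convexity of the range of the non-atomic vector charge $(\psi_1^+,\psi_1^-,\dots,\psi_r^+,\psi_r^-)$, whose linear image is $(\psi_1,\dots,\psi_r)$) to an \emph{explicit finite} construction that tolerates charges of both signs. This is exactly the difficulty flagged in the Introduction --- reconciling in one procedure the opposite incentives carried by the positive and negative parts of the charges --- and in the present scheme it is localized to the reconciliation, inside each constant-sign cell, of the participants who want to gain there with those who want to lose there. Securing a \emph{uniform} gap $1-\theta(r)>0$ for the fraction of total variation captured at each step, \emph{independently of the current remainder} $R$, is the quantitative core on which the exponential rate rests; everything downstream --- measurability of the $F_i$, absolute convergence of the charge series, and the passage to the limit inside the space of finite measurable partitions --- is bookkeeping.
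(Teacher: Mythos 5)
There is a genuine gap, and it sits exactly where you placed it yourself: the ``building block''. Your scheme needs, at every step, an \emph{explicitly constructed} set $D\subseteq R$ together with a partition $D=\bigsqcup_i D_i$ that is \emph{exactly equalizing} ($\psi_i(D_i)=\psi_i(D_j)$ for all $i,j$) and captures a uniform fraction $1-\theta(r)$ of the total variation of $R$. You never construct it: the passage ``a finite adjustment/bisection procedure, run cell by cell, equalizes the $r$ values simultaneously on a sub-domain of definite size'' is an assertion, not an algorithm. Sierpinski's intermediate-value property (Corollary~\ref{C1}) hands you one scalar constraint at a time; meeting $r$ (in fact $2r$, counting positive and negative parts) equality constraints simultaneously is precisely what Lyapunov convexity gives \emph{non-constructively}, and the whole content of Theorem~\ref{t1} is to avoid invoking that existence statement. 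Moreover, demanding exact equality is strictly stronger than what the theorem asks for, and it is not clear your per-cell reconciliation can even be carried out with a uniform $\theta(r)<1$: on a cell where every $\psi_i$ is a minus measure, ``equalizing the shedders'' on a definite fraction of the cell is the chore-division problem all over again, in its hardest (exact) form. So the proposal reduces the theorem to an unproved statement that is at least as hard as the theorem itself; everything you call bookkeeping (the gluing, absolute convergence, the null remainder) is indeed routine, but it is not where the difficulty lives.

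For contrast, the paper never attempts exactness. Its elementary step (Lemma~\ref{l2}) produces on any $S$ a subset $H$ carrying a merely \emph{strong} (envy-free) solution in which only one designated participant is guaranteed the fraction $2^{-(r-1)}$ of his own measure; rotating the designated participant through $1,\dots,r$ makes the averaged remainder decay geometrically, and Lemma~\ref{l0} glues strong solutions using only inequalities, so no equalities need survive the limit. Charges are handled by intersecting the $r$ Hahn--Jordan decompositions into at most $2^r$ cells: on a cell where some charges are non-negative, the negatively disposed participants simply receive the empty set (which is envy-free for them, though certainly not exact); on a cell where all charges are negative, the paper proves a separate ``gentleman's'' (chore) division by induction on $r$ with a modified satisfying-subset construction (Lemmas~\ref{l4} and~\ref{l3}), again only with inequalities. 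If you want to rescue your plan, you must either (a) downgrade your building block to an envy-free one and supply an explicit construction with a uniform capture fraction valid when all signs are negative --- which essentially reproduces the paper's Lemmas~\ref{l2} and~\ref{l3} --- or (b) give an actual finite procedure achieving simultaneous exact equalization, which no tool quoted in the paper provides.
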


The proof of this result is an elementary construction presented below 
for constructing a strong solution for the case of non-atomic countably 
additive charges.

\section{Technical means }

The main statement that we will use when constructing a solution 
for charges is the Hahn-Jordan expansion of a charge $\psi$ on 
a set $M$ according to which there is a measurable partition 
$M=A^+ \cup A^-$ such that 
\begin{itemize}
\item\vskip-5pt
$\psi(a)\ge 0$ for any set $a\subset A^+$;
\item
$\psi(b)\le 0$ for any set $b\subset A^-$.
\end{itemize}\vskip-5pt
In other words, $\psi=\mu^+ - \mu^-$, where $\mu^\pm$ are countably 
additive measures that are restrictions of the charge $\psi$ on 
the sets $A^\pm$, respectively.

\begin{definition} 
We say that a charge $\psi$ is \emph{non-atomic} if its corresponding
measures $\mu^+$ and $\mu^-$ are non-atomic.
\end{definition}

Also, below we will need the following important statement, 
proved by A.A. Lyapunov for the case of vector measures (see \cite{La,Ha}).

\begin{theorem}
For any countably additive nonatomic measure \/ $\mu$ on \/ $(M,\Sigma)$, 
the set of values of the measure \/ $\mu$ on the $\sigma$-algebra \/ $\Sigma$ 
is convex.
\end{theorem}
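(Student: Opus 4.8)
The plan is to deduce the statement from the classical ``intermediate value'' property for non-atomic measures, namely: \emph{for every $A\in\Sigma$ and every $c\in[0,\mu(A)]$ there is a measurable $B\subseteq A$ with $\mu(B)=c$} (Sierpinski's theorem). Granting this and taking $A=M$, the range $R(\mu):=\{\mu(E):E\in\Sigma\}$ is seen to equal the whole segment $[0,\mu(M)]$: the inclusion $R(\mu)\subseteq[0,\mu(M)]$ is immediate from $\mu\ge 0$ and countable additivity, while $0=\mu(\emptyset)$ and $\mu(M)$ lie in $R(\mu)$. Since a subset of $\mathbb{R}$ is convex precisely when it is an interval, this is exactly the assertion of the theorem.

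First I would isolate the only place where non-atomicity enters, the \emph{small-subset lemma}: if $\mu(A)>0$ then $A$ contains measurable subsets of arbitrarily small \emph{positive} measure. This follows by repeated bisection: since $A$ is not an atom there is a measurable split $A=A'\cup(A\setminus A')$ with both parts of positive measure; replacing $A$ by whichever part has measure at most $\mu(A)/2$ and iterating $n$ times produces a subset of measure lying in $(0,\,2^{-n}\mu(A)]$.

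The core step is to attain, and then match, a supremum. Fix $A$ and $c\le\mu(A)$, and put $s:=\sup\{\mu(B):B\in\Sigma,\ B\subseteq A,\ \mu(B)\le c\}\le c$. By Zorn's lemma choose a maximal chain $\mathcal{L}$ (ordered by inclusion) inside the family $\{B\in\Sigma:B\subseteq A,\ \mu(B)\le c\}$. Pick $B_n\in\mathcal{L}$ with $\mu(B_n)\uparrow s$; since members of a chain are pairwise comparable we may assume $B_1\subseteq B_2\subseteq\cdots$ (replacing $B_n$ by $B_1\cup\cdots\cup B_n$, still in $\mathcal{L}$), so $B_0:=\bigcup_n B_n$ satisfies $B_0\subseteq A$ and $\mu(B_0)=s$ by continuity from below. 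A short comparability argument (using maximality of $\mathcal{L}$) shows that $\mathcal{L}\cup\{B_0\}$ is again a chain, whence $B_0\in\mathcal{L}$. Now if $s<c$ then $\mu(A\setminus B_0)=\mu(A)-s>\mu(A)-c\ge 0$, so the small-subset lemma yields $D\subseteq A\setminus B_0$ with $0<\mu(D)<c-s$; then $B_0\cup D\subseteq A$, $\mu(B_0\cup D)=s+\mu(D)<c$, and $\mathcal{L}\cup\{B_0\cup D\}$ is a chain strictly larger than $\mathcal{L}$ --- a contradiction. Hence $s=c$ and $B_0$ is the set we want.

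The step I expect to be the main obstacle is precisely this ``reach the supremum and push it up to $c$'' argument, where two points are delicate: (i) that the union along a maximal chain is itself a member of the chain (the comparability argument, in which maximality is essential), and (ii) that non-atomicity is exactly what permits the final enlargement by a \emph{small} set $D$ --- in the presence of an atom the supremum $s$ may be strictly below $c$ with no way to bridge the gap. I would also remark that one can trade the general-$c$ chain argument for its special case $c=\mu(A)/2$ (the ``halving lemma'', proved the same way) and then realize an arbitrary target value $t\in[0,\mu(M)]$ completely explicitly, by expanding $t/\mu(M)$ in binary and taking the corresponding disjoint union of a sequence of successively halved pieces of $M$; this is closer to the constructive spirit of the paper, although the chain argument reaches every $t$ in one stroke and assumes nothing about the structure of $(M,\Sigma)$.
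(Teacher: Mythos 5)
You should first note that the paper does not prove this statement at all: it is Lyapunov's theorem, quoted from the literature (\cite{La,Ha}), and what the paper actually uses is the scalar consequence recorded as Corollary~\ref{C1} (Sierpinski). So a self-contained proof would be a genuine addition, and two of your three ingredients are fine: the reduction of convexity of the range to the intermediate-value property (the range of a finite measure is convex iff it is the interval $[0,\mu(M)]$), and the bisection (``small-subset'') lemma, which is a correct use of non-atomicity. However, the central step --- forcing the supremum $s$ up to $c$ via a maximal chain --- has a genuine gap, and it is not the one you flagged.

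Two things go wrong with the chain argument. First, you pick $B_n\in\mathcal{L}$ with $\mu(B_n)\uparrow s$, where $s$ is the supremum over the \emph{whole} family $\{B\subseteq A:\mu(B)\le c\}$; nothing guarantees that a maximal chain contains sets of measure anywhere near $s$, so such $B_n$ need not exist in $\mathcal{L}$ (this is essentially what one is trying to prove). Second, and more seriously, even working with $s_{\mathcal{L}}:=\sup_{B\in\mathcal{L}}\mu(B)$, so that $B_0\in\mathcal{L}$ and $\mu(B_0)=s_{\mathcal{L}}<c$, the final claim that $\mathcal{L}\cup\{B_0\cup D\}$ is a chain is unjustified: $\mathcal{L}$ may contain sets $C\supsetneq B_0$ (necessarily with $\mu(C\setminus B_0)=0$), and such $C$ need not be comparable with $B_0\cup D$, since $D\subseteq A\setminus B_0$ was chosen with no regard to the null sets $C\setminus B_0$; the union of all such $C$ ranges over a possibly uncountable chain and need not be measurable, so you cannot simply choose $D$ disjoint from all of them. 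Hence maximality of $\mathcal{L}$ yields no contradiction. This is not cosmetic: e.g.\ under CH, for Lebesgue measure on $[0,1]$ with $A=[0,1]$, $c=3/4$, one can extend the chain $\{[0,t]:t\le 1/2\}$ by the sets $[0,1/2]\cup S_\alpha$, where $(S_\alpha)_{\alpha<\omega_1}$ is an increasing continuous chain of countable subsets of $(1/2,1]$ with union $(1/2,1]$; any measurable set containing all of these contains $[0,1]$, so every maximal chain extending this family has measure-supremum $1/2<c$. The repair is standard: either order the family by inclusion \emph{modulo $\mu$-null sets} and apply Zorn to get a maximal element (chains have upper bounds: take a countable subsequence realizing the measure-supremum and use continuity from below), after which your enlargement by a small $D$ does contradict maximality; or drop Zorn altogether and use the greedy exhaustion --- at step $n$ adjoin to $B_n$ a subset of $A\setminus B_n$ of measure at least half of $\sup\{\mu(D):D\subseteq A\setminus B_n,\ \mu(B_n)+\mu(D)\le c\}$, and note that countable additivity forces these suprema to $0$, while your small-subset lemma then forces $\mu\bigl(\bigcup_n B_n\bigr)=c$. (Also, your argument tacitly assumes $\mu(M)<\infty$, which is harmless in the paper's setting.)
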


\begin{corollary}\label{C1}
For any countably additive nonatomic measure \/ $\mu$ on \/ $(M,\Sigma)$\textup, 
any measurable set $A\subset M$ and any number \/ $\delta\in [0,\mu(A)]$ 
there is a measurable subset $B\subseteq A$ with \/ $\mu(B)=\delta$.
\end{corollary}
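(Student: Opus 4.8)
The plan is to deduce this directly from the convexity theorem stated above (Lyapunov's theorem), the point being that the number $\delta$ we must realize lies in the set of values of $\mu$ on the measurable subsets of $A$, and that this set of values is an interval.

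First I would pass to the trace $\sigma$-algebra $\Sigma_A := \{S \cap A : S \in \Sigma\} = \{S \in \Sigma : S \subseteq A\}$, which is a $\sigma$-algebra of subsets of $A$. The restriction $\mu|_{\Sigma_A}$ is again a countably additive measure, and it is non-atomic: any atom of $\mu|_{\Sigma_A}$ would in particular be an atom of $\mu$ contained in $A$, contradicting the non-atomicity of $\mu$. Hence the convexity theorem applies to the measure $\mu|_{\Sigma_A}$ on $\Sigma_A$, and the set $V := \{\mu(B) : B \in \Sigma,\ B \subseteq A\}$ is a convex subset of $\mathbb{R}$.

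Next, note that $V$ contains $\mu(\emptyset) = 0$ (taking $B = \emptyset$) and $\mu(A)$ (taking $B = A$). A convex subset of $\mathbb{R}$ is an interval, so $V \supseteq [0, \mu(A)]$. Therefore, for any prescribed $\delta \in [0, \mu(A)]$ there is a measurable $B \subseteq A$ with $\mu(B) = \delta$, which is exactly the assertion of the corollary.

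The argument has essentially no hard part: the only things to verify are that the trace of a non-atomic measure on a smaller $\sigma$-algebra is still non-atomic, and that a convex subset of the line is an interval, both of which are routine. If one wished to avoid invoking the (originally vector-valued) convexity theorem altogether — in the spirit of the ``elementary'' approach of this paper — the same statement admits a direct proof for a single scalar measure by an exhaustion/bisection argument, repeatedly splitting off pieces of controlled measure from $A$ and passing to a suitable limit; but using the already-stated theorem is the shortest route.
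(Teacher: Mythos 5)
Your proposal is correct and follows exactly the route the paper intends: Corollary~\ref{C1} is presented there as an immediate consequence of the stated convexity (Lyapunov) theorem, which is precisely your argument of applying that theorem to the restriction of $\mu$ to the measurable subsets of $A$ and noting that a convex subset of $\mathbb{R}$ containing $0$ and $\mu(A)$ contains all of $[0,\mu(A)]$. The paper gives no further details, so your explicit verification of non-atomicity of the restriction is a harmless elaboration of the same proof.
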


The last statement (sometimes referred to as W.~Sierpinski's theorem; 1922) 
states the existence of a set of any given measure.
In the framework of the approach under consideration, we assume that 
such a subset can be presented explicitly.

\section{Strong solution for the case of $r\in\{2,3\}$ measures}

\begin{definition}
For the partition $\mathcal{F}=\{F_i\}_{i=1}^{r}$, we will say
that the set $A$ is associated to the $j$th participant if $F_j=A$.
\end{definition}

Let us start with constructing a strong solution to the problem for 
non-atomic countably additive measures in the simplest situation of 
two participants in the division process, $r=2$ (the Cut-and-Choose algorithm).

By Corollary~\ref{C1}, $M$ has a subset $A_1$ of half the $\mu_1$-measure.
Then, for $A_2:=M\setminus A_1$, we have 
$$
\mu_1(A_2)=\mu_1(M\setminus A_1)=\mu_1(M)-\frac{1}{2}\mu_1(M)=\frac{1}{2}\mu_1(M).
$$
We obtain a partition of $M$ into two parts equal in $\mu_1$-measure.
Then if the 2nd participant is associated with a larger (or equal) part 
(denoted by $A_2$) with respect to the measure $\mu_2$, and the remaining 
part (denoted by $A_1$) is associated with the 1st, then we obtain 
a strong solution to our problem. Indeed,
$$\mu_1(A_1)=\mu_1(A_2) \Longrightarrow \mu_1(A_1)\geq\mu_1(A_2),$$
$$\mu_2(A_2)\geq \mu_2(A_1).$$

At first glance, it seems that this pattern of sequential ``halving'' 
can easily be extended to a larger number, say $r=3$, of participants. 
More precisely, the 1st participant divides $M$ into $r$ parts that 
are equal from his point of view, and the remaining participants 
order this partition, each in accordance with his own measure.
Next, all ``disputed'' elements of the partition are divided between 
pairs of participants claiming them. In fact, a large number of 
publications popularizing the problem of fair division reduce to 
this algorithm. The division algorithm is finite and extremely simple, 
but its detailed analysis shows that the result will only be a weak 
solution, since it may turn out that a participant who does not 
participate in any of the halvings will receive a share strictly less 
than one of other participants (see, for example, \cite{Str2}).

Below we will need the following technical statement.\footnote{Here 
   and  in what follows, $\bigsqcup$ denotes the union of 
   disjoint (non-intersecting) sets.}.

\begin{lemma}\label{l0}
Let \/ $\mu$ be a countably additive functional, 
$M=\bigsqcup_{i=1}^{\infty} H_i$\textup, 
and let on each set $H_i$ there be a strong solution \/  
$\mathcal{F}_i=\left\{F_{j,i}\right\}_{j=1}^n$.
Then the partition \/ $\mathcal{F}=\left\{F_j\right\}_{j=1}^n$ of $M$ 
with $F_j=\bigsqcup_{i=1}^{\infty} F_{j,i}$ is a strong solution on $M$.
\end{lemma}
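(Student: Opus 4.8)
The plan is to unwind the definitions and exploit countable additivity termwise. Recall that $\mathcal{F}_i = \{F_{j,i}\}_{j=1}^n$ being a strong solution on $H_i$ means precisely that $\mu(F_{j,i}) \ge \mu(F_{k,i})$ for all $j,k \in \{1,\dots,n\}$ (here I am using the single functional $\mu$, since the lemma is stated for one $\mu$; if the intended reading is a family $\{\mu_j\}$, the argument is identical, applied coordinate-by-coordinate). I want to show the assembled partition $\mathcal{F} = \{F_j\}$ with $F_j = \bigsqcup_{i=1}^\infty F_{j,i}$ satisfies $\mu(F_j) \ge \mu(F_k)$ for all $j,k$.

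First I would check that $\mathcal{F}$ is genuinely a measurable partition of $M$: each $F_j$ is a countable union of measurable sets hence measurable; for fixed $j$ the sets $F_{j,i}$ are disjoint because the $H_i$ are disjoint and $F_{j,i} \subseteq H_i$; and $\bigsqcup_{j=1}^n F_j = \bigsqcup_{j=1}^n \bigsqcup_{i=1}^\infty F_{j,i} = \bigsqcup_{i=1}^\infty \bigsqcup_{j=1}^n F_{j,i} = \bigsqcup_{i=1}^\infty H_i = M$, where the reindexing of the double disjoint union is legitimate and the inner union over $j$ recovers $H_i$ since $\mathcal{F}_i$ partitions $H_i$.

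The core step is then: for any fixed $j,k$,
$$
\mu(F_j) = \mu\Big(\bigsqcup_{i=1}^\infty F_{j,i}\Big) = \sum_{i=1}^\infty \mu(F_{j,i}) \ge \sum_{i=1}^\infty \mu(F_{k,i}) = \mu\Big(\bigsqcup_{i=1}^\infty F_{k,i}\Big) = \mu(F_k),
$$
where the outer equalities are countable additivity of $\mu$ and the middle inequality is the termwise comparison $\mu(F_{j,i}) \ge \mu(F_{k,i})$ coming from $\mathcal{F}_i$ being a strong solution on $H_i$. Since $j,k$ were arbitrary, $\mathcal{F}$ is a strong solution on $M$.

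The only genuine subtlety — the one place I would be careful — is the convergence/rearrangement of the series when $\mu$ is a charge rather than a nonnegative measure: a termwise inequality between series need not survive if the series are merely conditionally convergent and one reorders terms. Here, however, the series $\sum_i \mu(F_{j,i})$ is exactly the value $\mu(\bigsqcup_i F_{j,i})$ dictated by countable additivity, so it converges (unconditionally, in fact, since the value cannot depend on the enumeration of a disjoint union) and no rearrangement issue arises; the termwise inequality is applied before summation, not after reordering. I would state this explicitly to reassure the reader, but it requires no extra hypothesis beyond countable additivity of $\mu$, which is given.
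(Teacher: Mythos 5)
Your proof is correct and follows essentially the same route as the paper's: verify the reassembled family is a partition via the interchange of the double disjoint union, then apply countable additivity termwise together with the per-piece inequalities $\mu_j(F_{j,i})\ge\mu_j(F_{k,i})$. Your added remark on unconditional convergence for charges is a sensible extra precaution but introduces nothing beyond the paper's argument.
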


\begin{proof}
First, $\mathcal{F}$ is indeed a partition of $M$:
$$
\bigsqcup\limits_{j=1}^n F_j=\bigsqcup\limits_{j=1}^n \bigsqcup\limits_{i=1}^\infty
F_{j,i}=\bigsqcup\limits_{i=1}^\infty \bigsqcup\limits_{j=1}^n
F_{j,i}=\bigsqcup\limits_{i=1}^\infty H_i=M.
$$
Second, since $\mu_j(F_{j,i})\ge\mu_j(F_{k,i})$, $\forall i,j,k$, we have  
$$
\mu_j(F_j)=\mu_j\biggl(\,\bigsqcup\limits_{i=1}^\infty
F_{j,i}\biggr)=\sum_{i=1}^\infty \mu_j(F_{j,i})\ge\sum_{i=1}^\infty
\mu_j(F_{k,i})=\mu_j\biggl(\,\bigsqcup\limits_{i=1}^\infty F_{k,i}\biggr)=\mu_j(F_k).
$$
This means $\mathcal{F}$ is a strong solution. \qed
\end{proof}

Using this result, we will analyze the construction of a strong solution for the case $r=3$.

Similar to the case $r=2$, we first divide $M$ into $r=3$ equal in measure $\mu_1$
parts $A_1$, $A_2$ and $A_3$. To do this, by Corollary~\ref{C1} we can choose 
a subset $A_1$ in $M$ such $\mu_1(A_1)=\frac{1}{3}\mu_1(M)$, and since
$$\mu_1(M\setminus A_1) = \mu_1(M)-\frac{1}{3}\mu_1(M) 
  = \frac{2}{3}\mu_1(M),$$
there is $A_2\subset M\setminus A_1$ such that
$\mu_1(A_2)=\frac{1}{3}\mu_1(M)$.

Letting $A_3:=M\setminus (A_1\cup A_2)$, we obtain 
\bea{\mu_1(A_3) \a= \mu_1(M\setminus (A_1\cup A_2))
                            = \mu_1(M\setminus A_1)-\mu_1(A_2) \\
                          \a=\frac{2}{3}\cdot\mu_1(M)-\frac{1}{3}\cdot\mu_1(M)
                             =\frac{1}{3}\cdot\mu_1(M) .} %

Let us order these sets with respect to the 2nd measure. 
Without loss of generality, we assume $\mu_2(A_1)\ge\mu_2(A_2)\ge\mu_2(A_3)$. 
Since $\mu_2(A_1)\ge\mu_2(A_2)$, there exists $A'\subset A_1$, such that 
$\mu_2(A')=\mu_2(A_2)$. Therefore, $\mu_1(A')\le\mu_1(A_1)=\mu_1(A_2)=\mu_1(A_3)$.

We want to divide the sets $A'$, $A_2$ and $A_3$ between the participants.
Let us associate with the third participant the largest of these sets 
relative to his measure $\mu_3$. 
If this is $A'$, then by associating $A_3$ with the first participant and 
$A_2$ with the second, we obtain a strong solution on $M':=A'\cup A_2\cup A_3$. 
Indeed, 
$$\mu_1(A')\leq\mu_1(A_2)=\mu_1(A_3),$$
$$\mu_2(A')=\mu_2(A_2)\geq\mu_2(A_3).$$

If $\mu_3(A_2)$ is the largest, then above the inequalities prove that 
if we associate $A_3$ with the first participant and $A'$ with the second, 
then the resulting partition turns out to be a strong solution.

Finally, if $\mu_3(A_3)$ is the largest, then to obtain a strong 
solution it is necessary to associate $A_2$ with the first participant 
and $A'$ with the second.

Next, we will look for a strong solution on the remaining part 
$M_1=M\setminus M'$.
Since $M'=A'\cup A_2\cup A_3$, then $M_1\subset A_1$, which means
$\mu_1(M_1)\leq \mu_1(A_1)=\frac{1}{3}\mu_1(M)$.
Note that the enumeration of measures was absolutely arbitrary, 
which means that our algorithm allows us to reduce any of the 
measures $\{\mu_i\}$ in the remaining part by a factor of more than 3. 
Thus, any of these remainder measures decreases exponentially. 
Therefore, in the limit, we get a partition of $M$ into 
a countable number of parts on which there is a strong solution, 
and the remainder is a set $M_\infty$ that is zero with respect 
to each measure (each partition $M_\infty$ is a strong solution).
Hence, by Lemma~\ref{l0} we obtain a strong solution on $M$.

\begin{definition}
A sequence of sets $\{A_n\}$ is called \emph{convergent} if the upper 
limit of this sequence  
$\bigcap\limits_{n=1}^\infty\bigcup\limits_{k=n}^\infty A_k$ 
coincides with the lower limit 
$\bigcup\limits_{n=1}^\infty\bigcap\limits_{k=n}^\infty A_k$.
\end{definition}

We will give a rigorous description of the construction of a sequence 
of sets converging to a strong solution when describing the algorithm 
for constructing a solution for the general case of $r\ge3$ participants.

The above elementary algorithm for constructing a strong solution for 
the case $r=3$ cannot be generalized to a larger number of participants. 
First of all, the point is that in the process of redistributing the parts 
$A_i$ among the participants, looping may occur. Therefore, retaining  
the inductive scheme of the division process, we propose another somewhat 
more complex design (see the next section).

It is also worth noting that even if we know how to solve the problem 
for measures, there still remains an important situation where functionals 
are charges. Let us discuss possible approaches to solving this problem. 
We start with the situation where the charges under consideration are 
strictly negative, that is, the $\{-\mu_i\}$ (taken with the opposite sign) 
are countably additive measures.

At first glance it seems that if these measures are 
probabilistic\footnote{A measure $\mu$ on $(M, \Sigma)$ is called 
   probabilistic if $\mu(M)=1$.}
then the setting under consideration is reduced to the previous one 
by replacing $\{\mu_i\}$ with a set of positive functionals $\{1-\mu_i\}$. 
However, the $1-\mu_i$ functionals are not additive, which means that they 
are not measures.

Indeed, if the functional $\phi=1-\mu$ were additive, then we would have 
\bea{\phi(A\sqcup B) \a= \phi(A)+\phi(B)=(1-\mu(A))+(1-\mu(B))=2-\mu(A)-\mu(B), \\
        \phi(A\sqcup B) \a= 1-\mu(A\sqcup B) = 1-\mu(A)-\mu(B).} %
We arrive at a contradiction:
$$
2-\mu(A)-\mu(B)=1-\mu(A)-\mu(B).
$$

Therefore, even for the case of strictly negative charges, a special
construction for the process of forming a strong solution is needed, 
which will be described in ~Section~\ref{S:6}.

\section{Construction of a strong solution for the case of $r>3$ measures.}

A special case of Theorem \ref{t1} is a situation where the functionals 
$\left\{\mu_i\right\}_{i=1}^r$ are measures. 
Below we will formulate and prove this statement.

\begin{theorem}\label{t2}
For any non-atomic countably additive measures 
\/ $\left\{\mu_i\right\}_{i=1}^r$ there is an explicit constructive 
algorithm for constructing a strong solution to the fair division problem.
\end{theorem}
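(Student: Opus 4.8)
The plan is to reduce Theorem~\ref{t2} to a single ``elementary step'' and then iterate it, exactly in the spirit of the $r=3$ construction above. Concretely, I would prove the following step lemma: given any measurable set $N$, any non-atomic countably additive measures $\mu_1,\dots,\mu_r$ on it and any designated index $i_0$, one can explicitly produce a measurable subset $N'\subseteq N$, an explicit strong solution of the fair-division problem on $N'$, and the complementary ``remainder'' $N\setminus N'$ satisfying $\mu_{i_0}(N\setminus N')\le\frac{r-1}{r}\,\mu_{i_0}(N)$; the latter bound will be guaranteed by arranging that, in the strong solution on $N'$, participant $i_0$ is allotted an \emph{entire} block of an equal-$\mu_{i_0}$ partition of $N$ into $r$ pieces, which therefore lies in $N'$ and outside the remainder. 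Granting this, the theorem follows as for $r=3$: starting from $M_0:=M$, apply the step lemma to $M_n$ with the designated index cycling through $1,2,\dots,r,1,2,\dots$, obtaining pairwise disjoint pieces $M'_n\subseteq M_n$ carrying strong solutions and a decreasing chain $M_{n+1}=M_n\setminus M'_n$. No step ever increases any $\mu_i(M_n)$, and over each block of $r$ consecutive steps every $\mu_i$ is multiplied by at most $\frac{r-1}{r}$, so $\mu_i(M_n)\to0$ exponentially for every $i$; the sets $M_n$ converge (in the sense of the definition above) to $M_\infty:=\bigcap_n M_n$ with $\mu_i(M_\infty)=0$ for all $i$, hence any partition of $M_\infty$ is trivially a strong solution. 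Since $M=M_\infty\sqcup\bigsqcup_{n\ge1}M'_n$ is a disjoint countable union on each piece of which we have a strong solution, Lemma~\ref{l0} produces a strong solution on all of $M$, and the construction is explicit because each step uses only finitely many applications of Corollary~\ref{C1}.

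For the step lemma I would relabel so that $i_0=1$ and begin, as for $r=3$, by having participant~$1$ cut $N$ into pieces $A_1,\dots,A_r$ of equal $\mu_1$-measure, so that participant~$1$ is indifferent among all of them. The remaining task is to hand out (possibly trimmed) versions $B_1,\dots,B_r$ of these pieces, $B_j\subseteq A_j$, together with an assignment $\sigma$ of the $r$ pieces to the $r$ participants, such that $B_{\sigma(1)}=A_{\sigma(1)}$ is untrimmed while, for each $k\ge2$, $B_{\sigma(k)}$ is a $\mu_k$-maximal piece among $B_1,\dots,B_r$; the discarded trimmings $\bigcup_j(A_j\setminus B_j)$ then form the remainder. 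With such $(B_j,\sigma)$ in hand, verifying that the induced partition of $N':=\bigsqcup_j B_j$ is strong is a short check: participant~$1$ holds a whole $A_{\sigma(1)}$ whose $\mu_1$-value $\frac1r\mu_1(N)$ dominates $\mu_1(B_j)\le\mu_1(A_j)=\frac1r\mu_1(N)$ for every $j$, and each participant $k\ge2$ holds a $\mu_k$-maximal one of the pieces actually distributed, so nobody envies anybody; moreover $A_{\sigma(1)}\subseteq N'$, giving the required $\frac{r-1}{r}$ contraction of $\mu_1$. To reach such a configuration I would run a trimming-and-matching procedure over participants $2,\dots,r$: each participant in turn either gets matched to a current $\mu_k$-maximal piece inside a partial system of distinct representatives, or trims one of their maximal pieces down to the value of a second one (the exact analogue of the move ``$\mu_2(A')=\mu_2(A_2)$'' used for $r=3$), thereby enlarging the ``maximal-piece'' bipartite graph; once that graph admits an assignment saturating $\{2,\dots,r\}$, an alternating-path swap — legitimate precisely because the deliberately created ties give each matched participant an equally good alternative — frees one untrimmed piece for participant~$1$.

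The step I expect to be the real obstacle is showing that this trimming-and-matching procedure \emph{terminates} — this is exactly the ``looping'' phenomenon noted above, whereby trimming one piece to create a tie can destroy a tie another participant was relying on, forcing a re-trim, and so on. I would handle it by fixing the processing order, allowing participants to be revisited, and exhibiting a monotone progress measure — for instance the size of the largest Hall-deficient set among the already-processed participants, or the lexicographically ordered vector of current piece values — which strictly improves at every trimming and is never worsened by any other operation, so that only finitely many trims (a number bounded in terms of $r$ alone) occur before a saturating assignment exists. The secondary obstacle is ensuring the final alternating-path swap does not leave some participant $k\ge2$ envious; I would pre-empt this by having the procedure, whenever it matches a participant to a still-untrimmed piece, simultaneously record a tie (by trimming some other piece) that can be spent later along that path. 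Once termination and the correctness of the swap are in place, the passage to the limit and the appeal to Lemma~\ref{l0} are routine, and the exponential rate of convergence is immediate from the per-block factor-$\frac{r-1}{r}$ contraction.
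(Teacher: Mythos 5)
Your outer iteration (cycle a designated measure, extract a piece carrying a strong solution while guaranteeing a fixed contraction of that measure on the remainder, let the leftover shrink to a null set, then glue with Lemma~\ref{l0}) is exactly the paper's framework, and that part of your argument is fine. The genuine gap is the step lemma itself: you propose to cut $N$ into $r$ pieces equal in $\mu_{i_0}$ and then distribute \emph{all} of the (trimmed) pieces via a trimming-and-matching procedure in which every participant $k\ge2$ ends up with a $\mu_k$-maximal distributed piece and participant $i_0$ keeps an untrimmed one. This is precisely the naive extension of the $r=3$ argument that the paper explicitly rejects (``in the process of redistributing the parts $A_i$ among the participants, looping may occur''), and you yourself identify the two unresolved points --- termination of the trim/re-trim cycle and the fact that the final alternating-path swap must not create envy --- without actually supplying proofs. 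A vaguely indicated progress measure (``largest Hall-deficient set'', ``lexicographic vector of piece values'') is not enough: a trim made to create a tie for one participant can destroy the maximality another participant's match relied on, and it is exactly this interaction that makes bounded envy-free protocols with full distribution (Aziz--McKenzie type) so complicated. As written, the heart of the theorem is asserted, not proved.

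The paper's Lemma~\ref{l2} avoids the difficulty by changing two things you insist on. First, the designated measure cuts $S$ into $2^{r-1}$ (not $r$) equal pieces; second, there is no attempt to distribute all pieces --- only $r$ of them are handed out and the rest are simply thrown into the remainder. Concretely, at step $t$ participant $t$ takes the $2^{r-t}$ pieces that are largest for $\mu_t$, trims them all down to the $\mu_t$-value of the smallest of these, and claims them; those indices are removed from every earlier participant's claim set. Because the claimed-and-surviving pieces are never touched later while all other pieces only shrink, previously established preferences remain preferences automatically (monotonicity replaces any matching or termination argument), and the counting $|K_j^t|\ge 2^{r-t}$ guarantees that after exactly $r$ steps each participant still holds at least one piece, with participant $1$'s piece untrimmed, giving $\mu_1(H)\ge 2^{-(r-1)}\mu_1(S)$. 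The price is a weaker per-round contraction factor $\frac{2^{r-1}-1}{2^{r-1}}$ instead of your hoped-for $\frac{r-1}{r}$, but the construction terminates in $r$ steps by design. If you want to keep your version of the step lemma (full distribution of $r$ pieces, contraction $\frac{r-1}{r}$), you must either prove termination and envy-freeness of your trimming-and-matching procedure in full detail or import a nontrivial external result; otherwise the proof is incomplete exactly where the theorem is hard.
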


\begin{definition}
For a disjunctive\footnote{Sets $\left\{A_i\right\}_{i=1}^r$ are called 
  disjunctive if they are pairwise disjoint.} 
set of sets $\left\{A_i\right\}_{i=1}^r$ by the {\em preferences of the $k$th 
participant}\footnote{As the reviewer pointed out to us, a similar 
     construction of preferences was described in the work \cite{SHHA}, 
     where some examples were analyzed for 3 and 4 participants.}
we call an index $j$ such that $\mu _k(A_j)\geq \mu _k(A_i)$ for any $i$.
\end{definition}

\begin{lemma}\label{l2}
For a given set of measures \/ $\left\{\mu_i\right\}_{i=1}^r$ and 
any index $k$ from an arbitrary measurable set $S\subset \Sigma$ we can 
select its subset $H\subset S$ of a ``large'' measure 
$$\mu_k(H)\geq \frac{1}{2^{r-1}} \mu_k(S) ,$$
on which there exists a strong solution to the fair division problem.
\end{lemma}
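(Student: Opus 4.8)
The plan is to prove Lemma~\ref{l2} by induction on $r$, adjoining one new participant at each step to a strong solution already built for the remaining $r-1$ and arranging that this costs at most one extra factor $\frac12$ in the retained $\mu_k$-measure. One must keep in mind that only the single-measure splitting of Corollary~\ref{C1} is available (no simultaneous control of several measures is assumed to be constructive); it is exactly this restriction that forces a genuine loss and produces the exponent $r-1$.

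For $r=1$ take $H=S$, and for $r=2$ the Cut-and-Choose construction above already gives a strong solution on $H=S$. So assume the statement for $r-1$ participants and relabel the measures so that $k=1$. Applying the inductive hypothesis to $\mu_1,\dots,\mu_{r-1}$ on $S$, tracking the index $1$, produces $\widetilde S\subseteq S$ with $\mu_1(\widetilde S)\ge 2^{-(r-2)}\mu_1(S)$ carrying a strong solution $\{G_i\}_{i=1}^{r-1}$ for the first $r-1$ participants. It then suffices to find $H\subseteq\widetilde S$ with $\mu_1(H)\ge\frac12\mu_1(\widetilde S)$ carrying a strong solution for all $r$ participants, since then $\mu_1(H)\ge 2^{-(r-1)}\mu_1(S)$. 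Participant $r$ must receive a piece; let $a$ be a preference of participant $r$ among $G_1,\dots,G_{r-1}$. The key simplification is that every $G_i$ with $i\ne a$ may be kept intact: because $\mu_i(G_i)\ge\mu_i(G_a)$ dominates anything that will later be cut out of $G_a$, participant $i$ never comes to envy. Hence the whole problem localizes to $G_a$, out of which one must carve a share for participant $r$ and a share for participant $a$ so that neither envies the other nor any untouched piece, while discarding only a small $\mu_1$-amount.

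This local step is the hard part, and it is why the elementary $r=3$ recipe does not extend directly: with one-measure cuts a subset of $G_a$ cannot in general be made simultaneously large for $\mu_r$ and small for $\mu_a$, so in the unfavourable case one is forced to discard part of $G_a$ (and, by propagation, perhaps trim other pieces), which both threatens the envy-freeness already achieved and eats into $\mu_1$. I would handle this by a finite cascade of trims --- participant $r$ trims $G_a$ along $\mu_r$ down to a tie with his second-best piece, the set-aside remainder becomes a candidate share, the pieces are re-awarded, each owner is re-examined, and any conflict thereby created is eliminated by a further trim toward a tie --- and then establish: (i) the cascade terminates (the obstruction for $r\ge4$ is a loop in the re-award step, to be broken by always trimming toward a tie rather than reshuffling whole pieces); (ii) the resulting assignment is a strong solution on the union $H$ of the awarded pieces; and (iii) the total $\mu_1$-mass discarded in the step does not exceed $\frac12\mu_1(\widetilde S)$. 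Granting (i)--(iii) the induction closes. The main obstacle is precisely the interplay of (i) and (iii): designing the trimming cascade so that it is provably loop-free while the accumulated $\mu_1$-loss stays below one half; an alternative worth trying, paralleling the $r=3$ argument, is to have participant $k$ first split $S$ into $r$ parts of equal $\mu_k$-measure and run the cascade inside that partition, so that the only loss comes from the trimmings removed from the $r-1$ pieces awarded to the other participants.
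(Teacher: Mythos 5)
There is a genuine gap: the heart of the lemma is precisely the step you leave open. Your plan reduces everything to a ``cascade of trims'' inside the piece $G_a$ preferred by the new participant, and you yourself list as unproven the three claims that carry all the weight --- termination without loops, envy-freeness of the resulting assignment, and the bound of $\tfrac12\mu_1(\widetilde S)$ on the discarded mass. None of these is routine. The localization to $G_a$ is already shaky: it is true that a participant $i\notin\{a,r\}$ cannot envy subsets of $G_a$, but participant $a$ (who now receives only a trimmed part of $G_a$) and participant $r$ (who receives the other trimmed part) may well envy the \emph{untouched} pieces $G_i$, so trims must propagate outside $G_a$, and that is exactly the looping phenomenon the paper points out as the obstruction to extending the elementary $r=3$ argument. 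Moreover, the $\tfrac12$-loss-per-new-participant bookkeeping is unjustified: if $a=1$ (the new participant prefers participant $k$'s own piece), the trimming eats directly into the measure you are trying to preserve, and no mechanism in your sketch prevents the accumulated loss from exceeding one half at a single stage. So the proposal is a programme, not a proof.

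For comparison, the paper does not induct on $r$ at all and thereby avoids the cascade. It first cuts $S$ into $2^{r-1}$ pieces of equal $\mu_k$-measure (Corollary~\ref{C1} applied repeatedly), and then runs $r-1$ rounds: at round $t$, participant $t$ takes the $2^{r-t}$ currently largest pieces with respect to $\mu_t$, shrinks each of them to a subset whose $\mu_t$-measure equals that of the smallest of the chosen ones, and claims these indices as his preferences; previously claimed indices that were just shrunk are relinquished by their former claimants. Since shrinking only decreases every measure, unshrunk pieces remain preferences of their earlier claimants, and a counting argument ($|K_j^t|\ge 2^{r-t}$, with the claim sets kept disjoint) guarantees that after round $r$ every participant still holds at least one claimed piece; in particular participant $k$'s surviving piece was never shrunk, so it retains exactly $2^{-(r-1)}\mu_k(S)$, giving both the measure bound and the strong solution on the union $H$ in one stroke. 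The halving structure $2^{r-1},2^{r-2},\dots,1$ is what replaces your unproven loop-free cascade, and it is the ingredient your proposal is missing.
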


\begin{proof}
For convenience, we renumber the measures $\{\mu_i\}$ 
so that the measure $\mu_k$ has index $1$.

At the first step, we divide $S$ into $2^{r-1}$ parts 
$A_1^1,\ldots,A_{2^{r-1}}^1$ equal in measure $\mu_1$. 
To do this, we will sequentially consider subsets $A_i^1$ in $S$.
First, by Corollary~\ref{C1} there is a set $A_1^1$ such 
that $\mu_1(A_1^1)=\frac{1}{2^{r-1}}\mu_1( S)$.

Next, at the $t$-th step, in $S\setminus \bigcup_{i=1}^{t-1} A_i^1$ 
there is such a subset $A_{i}^1$ that 
$\mu_1(A_t^1)=\frac{1}{2^{r-1}}\mu_1(S)$, since
$$\mu_1(S\setminus \cup_{i=1}^{2^{r-1}-1} A_i^1) 
   = \frac{2^{r-1}-(2^{r-1}-1))}{2^{r-1}}\mu_1(S)
   = \frac{1}{2^{r-1}}\mu_1(S).$$ 

Let us denote the sets obtained at the $t$-th step by 
$A_1^t,\ldots,A_{2^{r-1}}^t$.
Each of our next steps (at which we will select some preferences
of the participants) will consist of changing a current collection of sets.
Let $K_i^t$ be the set of selected preferences of the $i$th participant 
at the $t$th step.
At the first step, $K_1^1=\{1,\ldots,2^{r-1}\}$ (all the $A_i^1$ are his 
preferences), while the remaining $K_i^1$ are set to be empty.

Next, at the $t$-th step, we do the following:

We arrange all sets $A_1^{t-1},\ldots,A_{2^{r-1}}^{t-1}$ according 
to measure $\mu_t$ and consider the $2^{r-t}$ largest among them:
$$\mu_1(A_{k_1}^{t-1})\geq \mu_1(A_{k_2}^{t-1})
 \geq\ldots\geq \mu_1(A_{k_{2^{r-t}}}^{t-1}).$$
Then in each set $A_{k_i}^{t-1}$ there is a subset
$$A_{k_i}'\subset A_{k_i}^{t-1}: \:
 \mu_t(A_{k_i}')=\mu_t(A_{k_{2^{r-t}}}^{t-1}).$$

The new collection $A_i^{t}$ is the collection $A_i^{t-1}$ in which elements 
$A_{k_i}^{t-1}$ are replaced by $A_{k_i}'$. The set of new selected 
preferences $K_i^{t}$ is such that $K_t^{t}=\{ k_1,\ldots,k_{2^{r-t}}\}$,
and $K_i^{t}=K_i^{t-1}\setminus K_t^{t}$ for $i\neq t$
(in particular, $K_i^t=\emptyset$ for $i>t$).

Let us verify by induction on $t$ that the following properties are 
satisfied at each step:
\begin{enumerate}
\item Sets $K_i^t$ are disjunctive.
\item $|K_j^{t}|\geq 2^{r-t}$ for $j\leq t$.
\item For any $j$ and any $k\in K_j^{t}$ it is true that $A_k^{t}$ 
      is the preference of the $j$th participant.
\end{enumerate}

Let us check the induction base:
\begin{enumerate}
\item $K_1^1=\{1,\ldots,2^{r-1}\}$, and the remaining $K_i^1$ are empty,
      which means they are disjunctive.
\item For $j=1$, we have $|K_j^{1}|=|K_1^{1}|=2^{r-1}$.
\item For $j=1$, for any $k$ it is true that $A_k^{1}$ is the 
      preference of the $1$th participant by the construction. 
      For $j\neq 1$, we have $K_j^1=\emptyset$.
\end{enumerate}

Proof of the induction step:
\begin{enumerate}
\item For $j>t$, the sets $K_j^t$ are empty, which means no set can intersect 
      with them. 
      For $s,k<t$, the sets $K_s^t=K_s^{t-1}\setminus K_t^t$ and 
      $K_k^t=K_k^{t-1}\setminus K_t^t$ are disjoint, 
      since $K_s^{t-1}$ and $K_k^{t-1}$ were disjoint by the 
      induction hypothesis.
      For $j=t$ and $s<t$, the sets $K_t^t$ and $K_s^t=K_s^{t-1}\setminus K_t^t$ 
      are disjoint. This means that $K_i^t$ are disjoint as well.

\item $|K_t^{t}|\geq 2^{r-t}$, since $|K_t^{t}|=2^{r-t}$, and for $j\leq t-1$
      $$|K_j^{t}|=|K_j^{t-1}\setminus K_t^t|\geq |K_j^{t-1}|-|K_t^{t}|
         \geq 2^{r-(t-1)}-2^{r-t} =2^{r-t}$$
      (by the induction hypothesis $|K_j^{t-1}|\geq 2^{r-(t-1)}$ for $j\leq t-1$).

\item For any $j\leq t-1$, we have $K_j^{t}=K_j^{t-1}\setminus K_t^t$.
      By the induction hypothesis, the $A_i^{t-1}$ with indices lying in $K_j^{t-1}$ 
      were the preferences of the $j$-th participant.
      The $t$th step consists in changing the sets with indices lying in $K_t^t$;  
      each of them is replaced by some subset and hence its measure 
      $\mu_j$ is reduced.
      Thus, the sets with indices from $K_j^{t}=K_j^{t-1}\setminus K_t^t$ 
      remain to be the preferences of the $j$th participant.
      For $k\in K_t^t$, we have $\mu_t(A_k^t)=\mu_t(A_{k_{2^{r-1}}}^{t-1})$; 
      hence, since $\mu_t(A_{k_{2^{r-1}}}^{t-1})$ is by construction the largest 
      of the values of $\mu_t(A_i^t)$, then $A_k^{t}$ is the preference of the 
      $j$th participant.
\end{enumerate}

As a result, at the $r$th step we have $|K_i^r|\geq2^{r-r}=1$ for any $i$;  
therefore, each participant has exactly one preference, and these preferences 
do not coincide with each other due to the disjunctiveness of the $\{K_i^r\}$.
Let us associate to the $i$th participant his preference $F_i$.

In this case, the first paeticipant is associated with a set $F_1$, such that
$\mu_{1}(F_1)=\frac{1}{2^{r-1}} \mu_1(S)$, since by the construction 
we remove from the selected preferences of the 1st participant those 
that we have already changed.

This means that the set $H=\sqcup_{i=1}^{r}F_i$ is the desired one, since
$\mu_1(H)\geq \mu_1(F_1)=\frac{1}{2^{r-1}} \mu_1(S)$, and also
$\mu_i(F_i)\geq \mu_i(F_j)$ for any $i$ and $j$, since $F_i$ is the 
preference of the $i$th participant. \qed
\end{proof}

\begin{definition} The set $H$ obtained in Lemma~\ref{l2} will be called 
the \emph{$\mu_k$-satisfying subset} of $S$.\end{definition}

Let us construct a new sequence of sets $\{H_i\}$ according to the 
following rules:
\begin{itemize}
\item $H_1$ is a $\mu_1$-satisfying subset $M=M_0$.
\item If $k$ mod $r\ne 0$, then $t=$ $k$ mod $r$; otherwise $t=r$
\item $H_k$ is the $\mu_t$-satisfying subset of $M\setminus \cup_{i=1}^{k-1} H_i$.
\item $M_s=M\setminus \cup_{i=1}^{sr} H_i$.
\end{itemize}

Define also the averaged measure $\mu(A):=\frac{1}{r}\sum_{i=1}^r\mu_i(A)$.

In other words, for each measure we in turn select sets that are not less 
than $1/2^{r-1}$ of the current measure of the remainder 
$M\setminus \cup_{i=1}^{k-1} H_i$.
Indeed,
$$
\mu_t(H_k)\ge 2^{-(r-1)}\mu_t\Biggl(M\setminus\bigcup_{i=1}^{k-1} H_i\Biggr).
$$

\begin{lemma}
We have $\mu(M_s)\leq \frac{2^{r-1}-1}{2^{r-1}} \mu(M_{s-1})$.
\end{lemma}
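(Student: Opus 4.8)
The plan is to estimate the effect of one complete ``round'' of the construction --- the $r$ consecutive steps producing $H_{(s-1)r+1},\dots,H_{sr}$ --- on each of the measures $\mu_t$ \emph{individually}, and then to average the resulting $r$ inequalities. Throughout I would fix $s\ge 1$ and, for $(s-1)r\le k\le sr$, write $R_k:=M\setminus\bigcup_{i=1}^{k}H_i$, so that $R_{(s-1)r}=M_{s-1}$ and $R_{sr}=M_s$.

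First I would record two elementary facts. (i) Since the $H_k$ are disjoint measurable subsets of the current remainder and each $\mu_t$ is a nonnegative measure, passing from $R_{k-1}$ to $R_k$ only removes mass: $\mu_t(R_k)=\mu_t(R_{k-1})-\mu_t(H_k)\le\mu_t(R_{k-1})$ for every $t$ and every $k$. (ii) For each fixed $t\in\{1,\dots,r\}$ there is exactly one step inside the round, namely $k=k(t):=(s-1)r+t$, at which $H_{k}$ is a $\mu_t$-satisfying subset of $R_{k-1}$; this is precisely what the selection rule ``$t=k\bmod r$ (or $t=r$)'' arranges. At that single step Lemma~\ref{l2} gives $\mu_t(H_{k(t)})\ge 2^{-(r-1)}\mu_t(R_{k(t)-1})$, hence
$$\mu_t(R_{k(t)})\le\frac{2^{r-1}-1}{2^{r-1}}\,\mu_t(R_{k(t)-1}).$$
Combining this with fact (i) at the remaining $r-1$ steps of the round and telescoping from $k=(s-1)r$ up to $k=sr$, I obtain, for every $t$,
$$\mu_t(M_s)=\mu_t(R_{sr})\le\frac{2^{r-1}-1}{2^{r-1}}\,\mu_t(R_{(s-1)r})=\frac{2^{r-1}-1}{2^{r-1}}\,\mu_t(M_{s-1}).$$

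Finally I would sum these $r$ inequalities and divide by $r$, using the definition $\mu=\frac1r\sum_{t=1}^r\mu_t$:
$$\mu(M_s)=\frac1r\sum_{t=1}^r\mu_t(M_s)\le\frac{2^{r-1}-1}{2^{r-1}}\cdot\frac1r\sum_{t=1}^r\mu_t(M_{s-1})=\frac{2^{r-1}-1}{2^{r-1}}\,\mu(M_{s-1}),$$
which is the assertion. There is no genuine difficulty in this argument; the only points needing care are the bookkeeping in fact (ii) --- that each $\mu_t$ is ``served'' once and exactly once per round --- and the observation in fact (i) that deleting sets can never increase a nonnegative measure, so the $r-1$ steps of the round that deal with the other measures cannot spoil the single guaranteed contraction.
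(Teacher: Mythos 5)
Your proof is correct and follows essentially the same approach as the paper: isolate, for each measure $\mu_t$, the single step of the round at which the $\mu_t$-satisfying subset is extracted, apply the $2^{-(r-1)}$ guarantee of Lemma~\ref{l2} there, and then sum over $t$ and divide by $r$. The only (minor) difference is bookkeeping: you telescope the remainder directly, using monotonicity of each $\mu_t$ under set removal at the other $r-1$ steps, whereas the paper instead lower-bounds $\mu_j$ of the union of the sets removed during the round by $2^{-(r-1)}\mu_j(M_{s-1})$ via a chain of set manipulations; your version reaches the same per-measure inequality $\mu_t(M_s)\le\frac{2^{r-1}-1}{2^{r-1}}\mu_t(M_{s-1})$ a bit more cleanly.
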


\begin{proof}
For any $j$ we have
\bea{\mu_j(\cup_{i=(s-1)r+1}^{sr} H_i) 
             \a= \mu_j(H_{(s-1)r+j}) 
               + \mu_j(\cup_{i=(s-1)r+1}^{sr} H_i\setminus H_{(s-1)r+j}) \\ 
             \a\geq 2^{-(r-1)}\cdot \mu_j(M\setminus \cup_{i=1}^{(s-1)r+j-1} H_i) 
               + \mu_j(\cup_{i=(s-1)r+1}^{sr} H_i\setminus H_{(s-1)r+j}) \\
             \a\geq 2^{-(r-1)}\cdot (\mu_j(M_{s-1}\setminus \cup_{i=(s-1)r+1}^{(s-1)r+j-1} H_i) 
               + \mu_j(\cup_{i=(s-1)r+1}^{sr} H_i\setminus H_{(s-1)r+j})) \\
             \a\geq 2^{-(r-1)}\cdot \mu_j((M_{s-1}\setminus \cup_{i=(s-1)r+1}^{(s-1)r+j-1} H_i) 
                                         \cup (\cup_{i=(s-1)r+1}^{sr} H_i\setminus H_{(s-1)r+j})) \\
             \a= 2^{-(r-1)}\cdot \mu_j(M_{s-1}\cup (\cup_{i=(s-1)r+j+1}^{sr} H_i))) \\
             \a\geq 2^{-(r-1)}\cdot\mu_j(M_{s-1}). } %
Hence, %
\bea{\mu_j(M_s) \a= \mu_j(M_{s-1}\setminus\cup_{i=(s-1)r+1}^{sr} H_i) \\
                          \a= \mu_j(M_{s-1})-\mu_j(\cup_{i=(s-1)r+1}^{sr} H_i) \\
                          \a\leq \frac{2^{r-1}-1}{2^{r-1}}\cdot \mu_j(M_{s-1}) .}%
Thus,
$$ \sum_{j=1}^r\mu_j(M_{s})
 \leq \frac{2^{r-1}-1}{2^{r-1}} \sum_{j=1}^r\mu_j(M_{s-1}) .$$
Finally we obtain 
$\mu(M_{s})\leq\frac{2^{r-1}-1}{2^{r-1}} \mu(M_{s-1})$. \qed
\end{proof}

\begin{corollary}\label{c-1} 
We have $\mu(M_s)\leq (\frac{2^{r-1}-1}{2^{r-1}})^{s} \mu(M)$.
\end{corollary}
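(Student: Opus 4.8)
The plan is to obtain this bound by a straightforward induction on $s$, using the preceding lemma as the inductive step. Write $q:=\frac{2^{r-1}-1}{2^{r-1}}\in[0,1)$ for brevity.

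First I would dispose of the base case $s=0$: by definition $M_0=M$, so $\mu(M_0)=\mu(M)=q^0\mu(M)$, and the claimed inequality holds with equality. For the inductive step, suppose that $\mu(M_{s-1})\le q^{\,s-1}\mu(M)$ for some $s\ge 1$. The preceding lemma gives
$$
\mu(M_s)\le q\cdot\mu(M_{s-1}).
$$
Since $\mu=\frac1r\sum_{i=1}^r\mu_i$ is a nonnegative measure (being an average of the measures $\mu_i$) and $q\ge 0$, multiplying the induction hypothesis by $q$ preserves the inequality, so
$$
\mu(M_s)\le q\cdot\mu(M_{s-1})\le q\cdot q^{\,s-1}\mu(M)=q^{\,s}\mu(M),
$$
which is exactly the assertion for index $s$. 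By induction the bound holds for all $s\ge 0$.

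There is no real obstacle here — the only point worth a moment's care is the sign: the factor $q$ is nonnegative, and $\mu$ takes nonnegative values, so chaining the two inequalities is legitimate. (If one wished, one could equivalently just write $\mu(M_s)\le q\,\mu(M_{s-1})\le q^2\mu(M_{s-2})\le\cdots\le q^s\mu(M_0)=q^s\mu(M)$, telescoping the lemma directly.) Substituting back $q=\frac{2^{r-1}-1}{2^{r-1}}$ yields the stated estimate, and in particular $\mu(M_s)\to 0$ as $s\to\infty$ exponentially fast, which is what will be needed to pass to the limiting partition.
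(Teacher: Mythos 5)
Your proof is correct and follows exactly the route the paper intends: the corollary is just the preceding lemma iterated (telescoped) from $M_0=M$ down to $M_s$, which the paper leaves implicit. Your explicit induction, including the remark on the nonnegativity of $\mu$ and of the factor $q$, fills in that routine step correctly.
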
 

\begin{corollary} Let $M_{\infty}=M\setminus \cup_{i=1}^{\infty}H_i$, 
       then $\mu_j(M_{\infty})=0$ for every $j$.
\end{corollary}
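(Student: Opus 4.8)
The plan is to deduce this directly from Corollary~\ref{c-1}, which already controls the averaged measure $\mu(M_s)$ of the remainders along the subsequence $M_s=M\setminus\bigcup_{i=1}^{sr}H_i$. First I would observe that $M_\infty=M\setminus\bigcup_{i=1}^{\infty}H_i=\bigcap_{s=1}^{\infty}M_s$, since the sets $H_i$ are removed one at a time and $M_s$ collects the first $sr$ of them; in particular $M_\infty\subseteq M_s$ for every $s$. By monotonicity of each measure $\mu_j$ (countable additivity and non-negativity), this gives $\mu_j(M_\infty)\le\mu_j(M_s)$ for all $s$, hence $\mu(M_\infty)=\frac1r\sum_{j=1}^r\mu_j(M_\infty)\le\mu(M_s)$.

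Next I would let $s\to\infty$ in Corollary~\ref{c-1}: since $\frac{2^{r-1}-1}{2^{r-1}}<1$, the bound $\mu(M_s)\le\bigl(\frac{2^{r-1}-1}{2^{r-1}}\bigr)^s\mu(M)$ forces $\mu(M_s)\to 0$, and therefore $\mu(M_\infty)\le 0$. Since $\mu$ is an average of non-negative measures it is itself a non-negative measure, so $\mu(M_\infty)=0$, i.e.\ $\sum_{j=1}^r\mu_j(M_\infty)=0$. Each term in this sum is non-negative, so every term vanishes: $\mu_j(M_\infty)=0$ for each $j$, which is the claim.

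The only point that needs a little care — and the one I expect to be the mildest of obstacles rather than a real difficulty — is the identification $M_\infty=\bigcap_s M_s$ and the accompanying monotonicity $M_1\supseteq M_2\supseteq\cdots$; this follows because each $M_s$ is obtained from $M_{s-1}$ by deleting the further sets $H_{(s-1)r+1},\dots,H_{sr}$, so the $M_s$ form a decreasing chain whose intersection is exactly what remains after removing all the $H_i$. One should also note implicitly that the argument uses $\mu_j(M)<\infty$ so that $\mu(M)<\infty$ and the geometric bound is meaningful; this is part of the standing assumption that the $\mu_i$ are (finite) measures. Everything else is a one-line passage to the limit.
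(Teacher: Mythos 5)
Your proposal is correct and follows essentially the same route as the paper: pass to the limit $s\to\infty$ in Corollary~\ref{c-1}, conclude $\mu(M_\infty)=0$ from non-negativity of $\mu$, and then deduce $\mu_j(M_\infty)=0$ for each $j$ because $\mu$ is a sum (average) of non-negative terms. Your explicit remarks on $M_\infty=\bigcap_s M_s$, monotonicity, and finiteness of $\mu(M)$ merely spell out details the paper leaves implicit.
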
 

\begin{proof}
Let us tend the parameter $s$ in Corollary~\ref{c-1} to infinity.
Since $\frac{2^{r-1}-1}{2^{r-1}}<1$, we have $\mu(M_{\infty})\leq 0$,
and from the non-negativity of the measure $\mu$ it follows that 
$\mu(M_{\infty})=0$.
Since $\mu(M_{\infty})=\Sigma_{j=1}^n\mu_j(M_{\infty})$, 
we obtain the desired statement. \qed
\end{proof}

Thus, we represented the set $M$ as follows:
$M=\bigsqcup_{i=0}^{\infty} H_i$,
where on each $H_i$ there is a strong solution 
$\mathcal{F}_i=\{F_{j,i}\}_{j=1}^{n}$.
(On $M_{\infty}$ there is a strong solution for which the 1st measure 
is associated with $M_{\infty}$,
and the others with an empty set, so we will assume that $M_{\infty}=H_0$).
This means that by Lemma \ref{l0} there exists a strong solution on $M$.

For each measure $\mu_j$, consider the sequence of sets 
$N_{j,k}=\bigsqcup_{i=0}^{kr} F_{j,i}$.

\begin{lemma}
The sequence of sets $N_{j,k}$ converges as $k\to\infty$, and the limit of 
$N_{j,k}$ is equal to $\bigcup_{k=1}^{\infty} N_{j,k} =: N_j^{\infty}$.
\end{lemma}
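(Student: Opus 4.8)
The plan is to reduce everything to the elementary observation that $\{N_{j,k}\}_{k\ge1}$ is a monotone increasing sequence of sets, and then to evaluate its upper and lower limits directly from the definition of convergence of a sequence of sets given above.

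First I would record the monotonicity. By definition $N_{j,k}=\bigsqcup_{i=0}^{kr}F_{j,i}$, so $N_{j,k+1}=N_{j,k}\sqcup\bigsqcup_{i=kr+1}^{(k+1)r}F_{j,i}$, whence $N_{j,k}\subseteq N_{j,k+1}$ for every $k$, and therefore $N_{j,1}\subseteq N_{j,2}\subseteq\cdots$. (The unions are genuinely disjoint because the $F_{j,i}$ lie in the pairwise disjoint sets $H_i$; but only the inclusions are used below.)

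Next I would compute the two limits. For the upper limit, monotonicity gives $\bigcup_{k\ge n}N_{j,k}=\bigcup_{k\ge1}N_{j,k}$ for every $n$ (the terms $N_{j,1},\dots,N_{j,n-1}$ are already contained in $N_{j,n}$), hence $\bigcap_{n\ge1}\bigcup_{k\ge n}N_{j,k}=\bigcup_{k\ge1}N_{j,k}=N_j^{\infty}$. For the lower limit, monotonicity gives $\bigcap_{k\ge n}N_{j,k}=N_{j,n}$, hence $\bigcup_{n\ge1}\bigcap_{k\ge n}N_{j,k}=\bigcup_{n\ge1}N_{j,n}=N_j^{\infty}$. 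Since the upper and lower limits coincide, the sequence converges, and its limit is $\bigcup_{k=1}^{\infty}N_{j,k}=N_j^{\infty}$, as claimed.

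I do not expect a genuine obstacle here: the statement is essentially the standard fact that an increasing sequence of sets converges to its union, and the only point worth stating carefully is the identity $\bigcup_{k\ge n}N_{j,k}=\bigcup_{k\ge1}N_{j,k}$, which is immediate from monotonicity. As a side remark (not needed for the statement), one may also note that $N_j^{\infty}\setminus N_{j,k}=\bigsqcup_{i>kr}F_{j,i}\subseteq M\setminus\bigcup_{i=1}^{kr}H_i$, so by Corollary~\ref{c-1} the measure $\mu_j(N_j^{\infty}\setminus N_{j,k})\to0$ as $k\to\infty$, i.e. the convergence also holds in each measure $\mu_j$.
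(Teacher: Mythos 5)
Your proof is correct and follows essentially the same route as the paper: both use the monotonicity $N_{j,k}\subseteq N_{j,k+1}$ to show that the upper limit $\bigcap_{n\ge1}\bigcup_{k\ge n}N_{j,k}$ and the lower limit $\bigcup_{n\ge1}\bigcap_{k\ge n}N_{j,k}$ both equal $\bigcup_{k\ge1}N_{j,k}=N_j^{\infty}$. Your explicit justification of the monotonicity and the closing remark on convergence in measure are harmless additions beyond what the paper states.
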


\begin{proof}
Since $N_{j,k}\subset N_{j,k+1}$, we have 
$$\bigcup_{k=n}^{\infty} N_{j,k}=\bigcup_{k=1}^{\infty} N_{j,k}= N_j^{\infty}.$$
This means that the upper limit of $N_{j,k}$ satisfy the relation
$$\bigcap_{n=1}^{\infty}\bigcup_{k=n}^{\infty} N_{j,k}
=\bigcap_{n=1}^{\infty} N_j^{\infty}= N_j^{\infty}.$$
On the other hand,
$\bigcap_{k=n}^{\infty} N_{j,k}=N_{j,n}$. 
Therefore, the lower limit is
$$\bigcup_{n=1}^{\infty}\bigcap_{k=n}^{\infty} N_{j,k}
=\bigcup_{n=1}^{\infty} N_{j,n}= N_j^{\infty},$$
%\qed
\end{proof}

\begin{lemma}
$\{N_j^{\infty}\}_{j=1}^{r}$ is a strong solution on $M$.
\end{lemma}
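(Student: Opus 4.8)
The plan is to show that the partition $\{N_j^\infty\}_{j=1}^r$ is, up to the null set $M_\infty$, exactly the partition $\{F_j\}_{j=1}^r$ produced by gluing the strong solutions on the pieces $H_i$, and then invoke Lemma~\ref{l0}. First I would recall that $M=\bigsqcup_{i=0}^\infty H_i$ (with $H_0=M_\infty$), that on each $H_i$ we have a strong solution $\mathcal{F}_i=\{F_{j,i}\}_{j=1}^r$, and that by the definition $N_{j,k}=\bigsqcup_{i=0}^{kr}F_{j,i}$, so that the limit set from the previous lemma is
$$
N_j^\infty=\bigcup_{k=1}^\infty N_{j,k}=\bigsqcup_{i=0}^\infty F_{j,i}.
$$
Thus $N_j^\infty$ is precisely the set $F_j=\bigsqcup_{i=0}^\infty F_{j,i}$ appearing in Lemma~\ref{l0}, and that lemma (applied with $n=r$ and the decomposition $M=\bigsqcup_{i=0}^\infty H_i$) immediately yields that $\{N_j^\infty\}_{j=1}^r$ is a partition of $M$ and that $\mu_j(N_j^\infty)\ge\mu_j(N_k^\infty)$ for all $j,k$, i.e.\ a strong solution.

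For completeness I would spell out the two verifications inside that reduction rather than merely citing the lemma. For the partition property: the sets $F_{j,i}$ are pairwise disjoint across $j$ for fixed $i$ (each $\mathcal{F}_i$ is a partition of $H_i$) and the $H_i$ are pairwise disjoint, so the $N_j^\infty$ are pairwise disjoint; and $\bigsqcup_{j=1}^r N_j^\infty=\bigsqcup_{i=0}^\infty\bigsqcup_{j=1}^r F_{j,i}=\bigsqcup_{i=0}^\infty H_i=M$. For the envy-freeness: for each fixed $i$ we have $\mu_j(F_{j,i})\ge\mu_j(F_{k,i})$ because $\mathcal{F}_i$ is a strong solution on $H_i$; summing the nonnegative (countably additive) measure $\mu_j$ over the disjoint union gives
$$
\mu_j(N_j^\infty)=\sum_{i=0}^\infty\mu_j(F_{j,i})\ge\sum_{i=0}^\infty\mu_j(F_{k,i})=\mu_j(N_k^\infty).
$$

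There is essentially no obstacle here: the only point requiring a word of care is that the countable sums of measures converge (they do, being bounded by $\mu_j(M)<\infty$) so that the termwise inequality can be summed, and that $M_\infty=H_0$ genuinely carries a (trivial) strong solution — which it does, since assigning $M_\infty$ to participant $1$ and $\emptyset$ to everyone else gives $\mu_j(F_{j,0})\ge 0=\mu_j(F_{k,0})$ for $j\ne k$ once one recalls $\mu_j(M_\infty)=0$, so even the assignment to participant $1$ is harmless. Hence the statement follows directly from Lemma~\ref{l0} together with the identification $N_j^\infty=\bigsqcup_{i=0}^\infty F_{j,i}$ established in the preceding lemma, and this also completes the proof of Theorem~\ref{t2}.
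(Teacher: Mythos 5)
Your proposal is correct and follows essentially the same route as the paper: identify $N_j^\infty=\bigsqcup_{i=0}^\infty F_{j,i}$ via the preceding lemma, verify disjointness and that the union is $M$, and invoke Lemma~\ref{l0} (with $H_0=M_\infty$, where $\mu_j(M_\infty)=0$ makes the trivial assignment a strong solution) to get $\mu_j(N_j^\infty)\ge\mu_j(N_l^\infty)$. Your added remarks on summability and on the null piece $M_\infty$ only make explicit what the paper leaves implicit.
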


\begin{proof}
Since the sets $F_{j,i}$ do not intersect for different $j$, then
$N_j^{\infty}\bigcap N_l^{\infty}=\emptyset$ for $j\neq l$.

On the other hand,
$$\bigcup_{j=1}^{r} N_j^{\infty}
 = \bigcup_{j=1}^{r} \bigcup_{k=1}^{\infty} N_{j,k}
 = \bigcup_{j=1}^{r} \bigcup_{k=1}^{\infty} \bigsqcup_{i=0}^{kr} F_{j,i}
 = \bigcup_{j=1}^{r} \bigcup_{i=0}^{\infty} F_{j,i}
 = \bigcup_{i=0}^{\infty} H_i = M.$$

Therefore $M=\bigsqcup_{j=1}^{r} N_j^{\infty}$.

Since by Lemma~\ref{l0} we have $N_j^{\infty}=\bigcup_{k=1}^{\infty} N_{j,k}$,  
we conclude that $\mu_j(N_j^{\infty})\geq \mu_j(N_l^{\infty})$ for all $j,l$, 
which completes the proof. \qed
\end{proof}

\section{Construction of a strong solution for charges}\label{S:6}

Now we pass to the case of non-atomic countably additive charges 
$\{\psi_i\}_{i=1}^r$.

For each charge $\psi_i$, consider the corresponding sets $A^+_i$ and $A^-_i$
from the Hahn-Jordan expansion. Then intersections of sets of sets 
$\{A^+_i,A^-_i\}$ split the entire set $M$ into $2^r$ parts, the restriction  
of $\psi_i$ to each of these parts is either a measure or a minus measure.

Therefore, according to Lemma \ref{l0}, it suffices to find out how 
to solve the problem on each of these $2^r$ parts. According to the 
previous section, the problem has already been solved for those parts 
where all restrictions of the charges are positive.

Call the resulting parts by $X_1, \ldots, X_{2^r}$ and consider 
one of these parts, which we will denote for simplicity by $X$.

\begin{definition}
Denote by $I^+$ ($I^-$) the set of indices $i$ for which the charge restriction
$\psi_i$ to $X$ is a measure (minus measure). \end{definition}

There are 2 possible cases: $I^+\ne \emptyset$ and $I^+= \emptyset$. 
Let us first consider the first of them.

\begin{lemma}
If $I^+\ne \emptyset$, then $X$ has a strong solution.
\end{lemma}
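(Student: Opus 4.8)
The plan is to observe that a participant whose charge restricts to a \emph{minus}-measure on $X$ can have no complaint about receiving the empty set, so such participants impose no constraint; one then simply gives each of them the empty set and divides all of $X$ among the remaining participants (those in $I^+$) by the algorithm for measures already constructed in Theorem~\ref{t2}.

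First I would check that this reduction is legitimate. For $i\in I^+$ the restriction $\psi_i|_X$ is a countably additive measure on $(X,\Sigma_X)$, where $\Sigma_X=\{a\in\Sigma:a\subseteq X\}$: indeed $\psi_i(a)\ge0$ for every $a\subseteq X$, and applying this to $a=X\cap A_i^-$ (together with $\psi_i(X\cap A_i^-)\le0$, since $X\cap A_i^-\subseteq A_i^-$) forces $\mu_i^-(X)=0$, so that $\psi_i|_X=\mu_i^+|_X$. Being the restriction of the non-atomic measure $\mu_i^+$ to a measurable subset, $\psi_i|_X$ is again non-atomic (Corollary~\ref{C1} lets us split any positive-measure subset of $X$). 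Hence Theorem~\ref{t2} applies to the non-empty family $\{\psi_i|_X\}_{i\in I^+}$ and yields a measurable partition $\{F_i\}_{i\in I^+}$ of $X$ with $\psi_i(F_i)\ge\psi_i(F_j)$ for all $i,j\in I^+$ (if $|I^+|=1$ one just takes $F_i=X$).

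Next I would set $F_j:=\emptyset$ for every $j\in I^-$, so that $\{F_i\}_{i=1}^r$ is a measurable partition of $X$, and verify the strong-solution inequalities $\psi_i(F_i)\ge\psi_i(F_j)$ in three cases. (a) If $i,j\in I^+$ this is exactly the conclusion of Theorem~\ref{t2}. (b) If $i\in I^+$ and $j\in I^-$, then $\psi_i(F_j)=\psi_i(\emptyset)=0\le\psi_i(F_i)$, since $\psi_i|_X$ is a non-negative measure. (c) If $i\in I^-$ and $j$ is arbitrary, then $\psi_i(F_i)=\psi_i(\emptyset)=0\ge\psi_i(F_j)$, because $F_j\subseteq X$ and $\psi_i|_X$ is a minus-measure, so $\psi_i(a)\le0$ for every $a\subseteq X$. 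This exhausts all pairs, so $\{F_i\}_{i=1}^r$ is a strong solution on $X$.

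I do not anticipate a genuine obstacle in this lemma: its content is precisely the remark that the empty set is an optimal share for a participant with a negative charge, which lets the $I^-$ participants be discarded and reduces everything to the already-solved measure case on $X$. The delicate situation is the complementary one, $I^+=\emptyset$, where no participant can absorb the ``bad'' part $X$ for free and a separate construction is required; that case is handled in the next lemma.
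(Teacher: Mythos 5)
Your proposal is correct and follows essentially the same route as the paper: apply Theorem~\ref{t2} to the measures $\{\psi_i\}_{i\in I^+}$ on $X$, assign the empty set to each participant in $I^-$, and verify the strong-solution inequalities in the same three cases. The extra remarks on why $\psi_i|_X$ is a non-atomic measure for $i\in I^+$ are a harmless (and reasonable) elaboration of what the paper leaves implicit.
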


\begin{proof}
By Theorem \ref{t2}, on the set $X$ there is a strong solution 
$\mathcal{F^+}:=\{F_i\}_{i\in I^+}$ for measures $\{\psi_i\}_{i\in I^+}$.
Let us extend the collection $\mathcal{F^+}$ to a collection 
$\mathcal{F}:=\{F_i\}_{i=1}^{r}$ by the 
sets $F_i=\emptyset$ for $i\in I^-$.

Then for $i\in I^-$ and any $j$ we have 
$\psi_i(F_i)=\psi_i(\emptyset)=0\geq \psi_i(F_j)$,
since the $i$-th signed measure is negative on the considered set.

If $i\in I^+$ and $j\in I^+$, then $\psi_i(F_i)\geq \psi_i(F_j)$,
since $\mathcal{F^+}$ is a strong solution for positive measures.
If $i\in I^+$ and $j\in I^-$, then
$\psi_i(F_i)\geq 0 = \psi_i(\emptyset)=\psi_i(F_j)$. \qed
\end{proof}

It remains to construct a strong solution on such a set that the restriction 
of any considered charge to which is a minus measure. To do this, 
we notice that the problem of finding a strong solution for 
minus measures is equivalent to finding a partition as follows.

\begin{definition}
In the fair division problem with a set of measures $\{\mu_i\}_{i=1}^r$, 
we call a partition $\mathcal{F}$ a \emph{gentleman's} solution if 
$\mu_i(F_i)\leq \ mu_i(F_j)$ for any $i,j$.
\end{definition}

In the literature on economics, this setting is often called ``chore division''.
It is not difficult to understand that from the point of view of solving the 
problem of fair division with a set of strictly negative charges 
$\{\psi_i=-\mu_i\}_{i=1}^r$ these two settings are equivalent.

Similar to Lemma \ref{l0}, the following its variation can be formulated an proved.

\begin{lemma} \label{l6}
Let $\mu$ be a countably additive functional, $M=\sqcup_{i=1}^{\infty} H_i$ 
and assume that on each set $H_i$ there is a gentleman's solution
$\mathcal{F}_i=\{F_{j,i}\}_{j=1}^n$. Then the partition
$\mathcal{F}=\{F_{j}\}_{j=1}^n$ with $F_j=\sqcup_{i=1}^{\infty} F_{j,i}$ 
is a strong solution on $M$.
\end{lemma}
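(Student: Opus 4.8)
The plan is to mimic the proof of Lemma~\ref{l0} almost verbatim, simply replacing the inequalities for a strong solution by those for a gentleman's solution and checking that the conclusion still comes out as a \emph{strong} solution (note the asymmetry: the hypotheses are about gentleman's solutions on the pieces, but the conclusion is a strong solution on $M$; this is harmless because the statement concerns a single fixed functional $\mu$, so the labels of participants are irrelevant here). First I would verify that $\mathcal{F}=\{F_j\}_{j=1}^n$ is genuinely a partition of $M$: since for each $i$ the collection $\{F_{j,i}\}_{j=1}^n$ partitions $H_i$, interchanging the two (countable) disjoint unions gives $\bigsqcup_{j=1}^n F_j=\bigsqcup_{j=1}^n\bigsqcup_{i=1}^\infty F_{j,i}=\bigsqcup_{i=1}^\infty\bigsqcup_{j=1}^n F_{j,i}=\bigsqcup_{i=1}^\infty H_i=M$, exactly as in Lemma~\ref{l0}.

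Next I would carry out the measure inequality. By hypothesis, on each $H_i$ the partition $\mathcal{F}_i$ is a gentleman's solution, which for the single functional $\mu$ (playing the role of every $\mu_j$ in the chore-division definition) forces $\mu(F_{j,i})\le\mu(F_{k,i})$ for all $i$ and all $j,k$. Actually, as in Lemma~1.2, each of the defining inequalities of a gentleman's solution collapses, when all functionals coincide with $\mu$, to $\mu(F_{j,i})=\mu(F_{k,i})$ for all $j,k$; I would use this equality. Then for any $j,k$, using countable additivity of $\mu$ along the disjoint union,
\[
\mu(F_j)=\mu\Bigl(\,\bigsqcup_{i=1}^\infty F_{j,i}\Bigr)=\sum_{i=1}^\infty\mu(F_{j,i})=\sum_{i=1}^\infty\mu(F_{k,i})=\mu\Bigl(\,\bigsqcup_{i=1}^\infty F_{k,i}\Bigr)=\mu(F_k),
\]
so in particular $\mu(F_j)\ge\mu(F_k)$ for all $j,k$, which is precisely the condition for $\mathcal{F}$ to be a strong solution on $M$ with respect to $\mu$.

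There is essentially no obstacle here; the only point requiring a moment's care is the bookkeeping of the asymmetric statement. The gentleman's condition $\mu_i(F_i)\le\mu_i(F_j)$ and the strong condition $\mu_i(F_i)\ge\mu_i(F_j)$ look opposed, but since the lemma is stated for one functional $\mu$ and both conditions degenerate to the equality $\mu(F_i)=\mu(F_j)$, the apparent sign conflict evaporates. One should also confirm that countable additivity is all that is used, so the argument applies to the "minus measure" case (a functional of the form $-\mu_i$) without change, which is exactly the context in Section~\ref{S:6} where this lemma will be invoked. I would therefore present the proof in three short moves — partition check, pointwise equality on each $H_i$, summation — mirroring the structure of the proof of Lemma~\ref{l0}.
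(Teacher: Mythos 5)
There is a genuine gap, and it lies exactly in the step you flag as "the only point requiring a moment's care." You resolve the mismatch between the gentleman's hypotheses and the "strong" conclusion by declaring that the lemma concerns a single functional $\mu$ shared by all participants, so that the gentleman's condition on each $H_i$ collapses to the equalities $\mu(F_{j,i})=\mu(F_{k,i})$. That reading trivializes the lemma and is not what the paper proves or needs. As in Lemma~\ref{l0} (whose statement has the same loose phrase "let $\mu$ be a countably additive functional" but whose proof immediately works with the whole family $\mu_1,\dots,\mu_n$), the hypothesis here is that each $\mathcal{F}_i$ is a gentleman's solution for the collection of distinct measures $\{\mu_j\}_{j=1}^n$, i.e.\ $\mu_j(F_{j,i})\le\mu_j(F_{k,i})$ for all $i,j,k$, and the paper's proof simply sums these inequalities using countable additivity of each $\mu_j$:
$$
\mu_j(F_j)=\sum_{i=1}^\infty\mu_j(F_{j,i})\le\sum_{i=1}^\infty\mu_j(F_{k,i})=\mu_j(F_k).
$$
So the conclusion actually established (and actually used later) is that $\mathcal{F}$ is a gentleman's solution for the $\mu_j$'s --- equivalently a strong solution for the negative charges $\psi_j=-\mu_j$, which is the only sense in which the word "strong" in the statement can be read (otherwise it is a typo for "gentleman's").

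Your collapse to equality happens only when all participants share one measure; in the setting where the lemma is invoked (gluing the gentleman's solutions on $M_\infty$ and on the pieces $H_i$ for $r$ distinct measures in Section~\ref{S:6}), the measures differ, your argument yields nothing, and the literal conclusion you defend --- a strong solution for the measures themselves --- is false in general: gluing pieces on which each participant receives what he values \emph{least} cannot give each participant what he values \emph{most}. The repair is not to specialize to one functional but to keep the family $\{\mu_j\}$, drop the equality step, and carry the inequality $\le$ through the summation, exactly as the paper does; your partition check is fine (it is the same as in Lemma~\ref{l0}, and the paper omits repeating it here), but the heart of the lemma is the preserved inequality, not a degenerate equality.
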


\begin{proof}
Since $\mu_j(F_{j,i})\leq \mu_j(F_{k,i})$ for all $i,j,k$, then %
$$
\mu_j(F_j)=\mu_j\biggl(\,\bigsqcup\limits_{i=1}^\infty F_{j,i}\biggr)
=\sum_{i=1}^\infty \mu_j(F_{j,i})\le\sum_{i=1}^\infty \mu_j(F_{k,i})=
\mu_j\biggl(\,\bigsqcup\limits_{i=1}^\infty F_{k,i}\biggr)=\mu_j(F_k).
$$\qed
\end{proof}

Let a set $S\in\Sigma$ satisfy the condition $\prod_{i=1}^r\mu_i(S)\ne0$.

\begin{lemma} \label{l4}
There are sets $R^1,\ldots,R^r$, $\sqcup_i R^i\subset S$ and a renumbering 
of the measures $\mu_{l_1},\ldots,\mu_{l_r}$ such that for each $k$ we have
$\mu_{l_k}(R^{k})=\frac{1}{r}\mu_{l_k}(S)$, and for $j>k$ we have 
$\mu_{l_j}(R^{ k})\leq \frac{1}{r}\mu_{l_k}(S)$. 
\end{lemma}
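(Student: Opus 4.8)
The plan is to construct the sets $R^1, \ldots, R^r$ and the renumbering of the measures simultaneously, by a greedy/inductive procedure in $r$ stages, at each stage "freezing" one measure and one set. The intuition is this: among the remaining (not-yet-assigned) measures, we want to pick one measure $\mu_{l_k}$ and carve out of (the remaining part of) $S$ a set $R^k$ of exactly $\frac1r\mu_{l_k}(S)$-measure, chosen so that every measure still waiting in line assigns $R^k$ at most $\frac1r$ of its own total on $S$. The point of "waiting in line" is that we only need to control the later measures $\mu_{l_j}$ with $j>k$ on $R^k$; the already-assigned ones are irrelevant to the claimed inequalities.

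The key step at stage $k$: suppose measures with indices in some set $J$ (with $|J| = r-k+1$) remain, and we have a remaining region $S_k \subseteq S$ with $\mu_i(S_k) \ge (1 - \frac{k-1}{r})\mu_i(S)$ — actually, more carefully, I would track that $\mu_i(S_k) = \mu_i(S) - \sum_{m<k}\mu_i(R^m)$, and we need to ensure $\mu_{l_k}(S_k) \ge \frac1r\mu_{l_k}(S)$ so that Corollary~\ref{C1} lets us find a subset of the right size. Now I claim we can always pick the index $l_k \in J$ and a set $R^k \subseteq S_k$ with $\mu_{l_k}(R^k) = \frac1r\mu_{l_k}(S)$ and $\mu_j(R^k) \le \frac1r \mu_j(S)$ for all $j \in J$ (not just $j$ with later position — being $\le$ for all remaining indices is even stronger and is what I would prove). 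To see this, first halve nothing yet: for a single remaining measure this is trivial. For $|J|\ge 2$, I would argue by an averaging/pigeonhole device: split $S_k$ into $r$ pieces of equal $\mu_{l}$-measure for some fixed $l\in J$, or better, use the Lyapunov convexity theorem (Theorem in Section~3) applied to the vector measure $(\mu_i)_{i\in J}$ restricted to $S_k$ to get a set $R$ with $\mu_i(R) = \frac1r\mu_i(S_k)$ for all $i\in J$ simultaneously — then enlarge it slightly within $S_k$ (again by convexity/Corollary~\ref{C1}) so that for one well-chosen index $l_k$ the value is exactly $\frac1r\mu_{l_k}(S)$ while for the others it stays $\le \frac1r\mu_j(S)$; the choice of $l_k$ is the one whose "deficit" $\mu_l(S) - \mu_l(S_k)$ accumulated so far is largest, which guarantees there is room to grow without pushing any $\mu_j(R^k)$ above $\frac1r\mu_j(S)$.

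I expect the main obstacle to be precisely this simultaneous control: ensuring that the set $R^k$ can be made to have measure \emph{exactly} $\frac1r$ of the total for the chosen index while keeping all other remaining measures \emph{at or below} $\frac1r$ of their totals, given that we are carving from the shrunken region $S_k$ rather than from $S$ itself. The bookkeeping that makes this work is choosing $l_k$ greedily as the index maximizing the relative amount already removed (equivalently, minimizing $\mu_l(S_k)/\mu_l(S)$): for that index the remaining region is "smallest relative to the target," so scaling up from $\frac1r\mu_{l_k}(S_k)$ to $\frac1r\mu_{l_k}(S)$ requires adding the least extra, and one checks the added mass is still within $S_k$ and does not overshoot $\frac1r\mu_j(S)$ for the others. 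After $r$ stages every measure has been assigned exactly one set of exactly $\frac1r$ of its total, and the ordering $l_1,\ldots,l_r$ records the stage at which each was assigned; the inequality $\mu_{l_j}(R^k)\le\frac1r\mu_{l_k}(S)$ for $j>k$ follows since at stage $k$ the index $l_j$ was still "in line" and we proved $\mu_{l_j}(R^k)\le\frac1r\mu_{l_j}(S)$, together with the normalization making $\frac1r\mu_{l_j}(S) = \mu_{l_j}(R^j) \le \frac1r\mu_{l_k}(S)$ under whatever common normalization the paper uses (probabilistic measures), or else the inequality is stated in the per-measure form directly and no comparison across measures is needed. I would finish by noting disjointness of the $R^k$ is automatic since $R^k\subseteq S_k$ and $S_{k+1}=S_k\setminus R^k$.
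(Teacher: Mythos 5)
Your overall plan (greedily freeze one measure and one set per stage, prove the stronger per-measure bound $\mu_j(R^k)\le\frac1r\mu_j(S)$ for all still-unassigned $j$, and get disjointness from carving inside the shrinking remainder $S_k$) is sound, and your closing remark that the cross-index form $\frac1r\mu_{l_k}(S)$ in the statement should really be the per-measure form is consistent with what the paper actually proves and uses. But the heart of your stage-$k$ step is a genuine gap, and as stated it is wrong. The unproven claim is precisely the ``enlarge it slightly \dots and one checks the added mass does not overshoot''. If the enlargement is done by Corollary~\ref{C1} alone, you control only $\mu_{l_k}$ on the added set and have no bound at all on the other measures. If instead you add a Lyapunov-proportional piece $B\subseteq S_k\setminus R$ with $\mu_i(B)=\alpha\,\mu_i(S_k\setminus R)$, then writing $d_l:=\mu_l(S)-\mu_l(S_k)$ for the accumulated deficit, hitting $\mu_{l_k}(R\cup B)=\frac1r\mu_{l_k}(S)$ forces $\alpha=\frac{d_{l_k}}{(r-1)\mu_{l_k}(S_k)}$, and the non-overshoot condition for the other remaining $j$ becomes $\frac{d_{l_k}}{\mu_{l_k}(S_k)}\le\frac{d_j}{\mu_j(S_k)}$. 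So the index that works is the one \emph{minimizing} the relative deficit (maximizing $\mu_l(S_k)/\mu_l(S)$), which is the exact opposite of your greedy rule; your heuristic ``largest deficit requires adding the least extra'' is backwards. Worse, with your choice the required set can fail to exist at all, not just the proportional one: if on $S_k$ two remaining measures are proportional but have unequal relative deficits, then any $R^k$ with $\mu_{l}(R^k)=\frac1r\mu_{l}(S)$ for the larger-deficit index $l$ automatically has $\mu_j(R^k)>\frac1r\mu_j(S)$ for the other. (If you make exactly proportional vector-Lyapunov choices at every stage the deficits stay equal and the issue disappears — but then your two-step construction is superfluous, since the vector form of Lyapunov's theorem already yields a partition with $\mu_i(R^k)=\frac1r\mu_i(S)$ for every $i$ and $k$, proving the lemma outright.)

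This also points to the second difference from the paper: the paper only ever uses the scalar statement (Corollary~\ref{C1}), never the vector-valued Lyapunov theorem, because it wants each selection to be explicitly presentable. Its proof of this lemma avoids your overshoot problem by never enlarging: at stage $k$ it carves from the remainder a set of exactly $\frac1r\mu_{j_0}(S)$ for the first remaining measure $j_0$ (feasible since the remainder has $\mu_j$-measure at least $\frac{r-k+1}{r}\mu_j(S)$), then runs through the remaining measures and, whenever the current set exceeds $\frac1r\mu_j(S)$ in $\mu_j$-measure, shrinks it to exactly that value. Shrinking can only decrease all other measures, so no bound is ever violated, and $l_k$ is simply the last index at which a shrink (or the initial exact cut) occurred: that measure ends with equality, all other remaining measures with $\le$. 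If you replace your ``proportional base plus enlargement'' by this shrink-only sweep, your induction goes through with only Corollary~\ref{C1}.
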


\begin{proof}
We will successively construct this collection of sets. At the 1st step, 
we consider $R^1_1\subset S$ such that 
$\mu_1(R^1_1)=\frac{1}{r} \mu_1(S)$. 
Further, if at the $t$th step 
$\mu_t(R_{t-1}^1)>\frac{1}{r} \mu_t(S)$, then 
$R_t^1$ is such subset $R_{t-1}^1$ 
that $\mu_t(R_{t}^1)=\frac{1}{r}\mu_t(S)$; otherwise, $R_t^1=R_{t-1}^1$.

At the end of the $r$-th step we obtain the set $R^1=R_r^1$. Let $l_1$ 
denote the step number after which we no longer changed $R_i^1$. 
Then, by the construction, 
$\mu_{l_1}$ and $R^1$ satisfy the following conditions:
$$ \mu_{l_1}(R^1)=\frac{1}{r} \mu_{l_1}(S), \qquad
 \mu_{j}(R^1)\leq \frac{1}{r} \mu_{j}(S)~~j\neq l_1 .$$.

Now at the $k$th step, in a similar way we will select $R^k$ and 
a measure $\mu_{l_k}$ from the set $S\setminus \sqcup_{i=1}^{k-1}R^i$ 
for all measures except $\mu_{l_1},\ldots,\mu_{l_{k-1}}$,
such that $\mu_{l_k}(R^k)=\frac{1}{r} \mu_{l_k}(S)$,
and $\mu_{j}(R^k)\leq \frac{1}{r} \mu_{j}(S)$ for all 
$j\notin\{l_1,\ldots,l_k\}$.
We only need to check that we can do the 1st stage of the $k$-th step, 
that is, with respect to some measure $\mu_j$ for $j\notin\{l_1,\ldots,l_{k-1}\}$ 
remainder measure is not less than $\frac{1}{r}\mu_{j}(S)$.

\bigskip

By the construction, $\mu_j(R^i)\leq \frac{1}{r}\mu_j(S)$ for any 
$j\notin\{l_1,\ldots,l_{k-1}\}$.
Hence,
$$\mu_j(S\setminus \bigsqcup_{i=1}^{k-1}R^i) = \mu_j(S)-\sum_{i=1}^{k-1}\mu_j(R^i)
 \geq \mu_j(S)\left(1-\frac{k-1}{r}\right).$$
Therefore, for $k\leq r$ we can take this step, and as a result we obtain 
the desired ordering of the measures $\mu_{l_i}$ and the set $R^i$. \qed
\end{proof}

\begin{definition}
In the case of a gentleman's solution for a disjunctive collection of sets 
$\{A_i\}_{i=1}^r$ the 
\textit{preference of the $k$th participant} is an index $j$ such that
$\mu _k(A_j)\leq \mu _k(A_i) ~~\forall i$. \end{definition}

\begin{lemma} \label{l3}
For a given set of measures $\{\mu_i\}_{i=1}^r$ from any set $S\subset M$ 
one can select a subset $H$ such that if $\{\mu_{l_i}\}_{i=1}^r$ is the 
ordering of measures for $S$ from Lemma \ref{l4}, then
$$\mu_{l_1}(H)\geq \frac{1}{2^{r-1}} \mu_{l_1}(S),$$
and $H$ has a gentleman's solution.
\end{lemma}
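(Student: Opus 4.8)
The plan is to mirror the construction of Lemma~\ref{l2}, but with the inequalities reversed so that we build a \emph{gentleman's} solution instead of a strong one, and to use the ordering supplied by Lemma~\ref{l4} rather than an arbitrary renumbering. First I would invoke Lemma~\ref{l4} to obtain the renumbering $\mu_{l_1},\ldots,\mu_{l_r}$ and the disjoint sets $R^1,\ldots,R^r\subset S$ with $\mu_{l_k}(R^k)=\frac1r\mu_{l_k}(S)$ and $\mu_{l_j}(R^k)\le\frac1r\mu_{l_k}(S)$ for $j>k$. For notational convenience relabel so that $l_k=k$. The set on which we will work is $\sqcup_{k=1}^r R^k$; note that we only need $\mu_1(H)\ge 2^{-(r-1)}\mu_1(S)$, and since $\mu_1(R^1)=\frac1r\mu_1(S)$, any $H\supseteq R^1$ (or even any $H$ containing a sufficiently large chunk of $R^1$) will satisfy the measure bound — this is why the lemma asks only for the first measure in the Lemma~\ref{l4} ordering.

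Next I would run the dual of the Lemma~\ref{l2} machinery. Split $R^1$ into $2^{r-1}$ pieces of equal $\mu_1$-measure using Corollary~\ref{C1}; set these aside as the candidate parts, with all of them initially being ``preferences'' (in the gentleman's sense, i.e. \emph{minimal} $\mu_k$-parts) of participant~$1$. At step $t$ ($t=2,\ldots,r$) order the current $2^{r-1}$ parts by $\mu_t$ and take the $2^{r-t}$ \emph{smallest}; to equalize, \emph{enlarge} each of them by adjoining a subset drawn (via Corollary~\ref{C1}) from $R^t$ — this is the key place where the Lemma~\ref{l4} construction is needed, because $R^t$ is disjoint from $R^1$ and from the earlier $R^i$, so the enlargements never collide and never disturb parts already fixed. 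Since $\mu_s(R^t)\le\frac1r\mu_s(S)$ for the relevant later indices, and since we are making parts \emph{larger} in $\mu_t$-measure only up to the size of the $2^{r-t}$-th smallest, adjoining material from $R^t$ keeps these parts minimal for participant~$t$ while their $\mu_s$-measure for $s<t$ is only increased — but those were preferences of earlier participants obtained as minima, and enlargement preserves ``this was already the smallest'' only if we are careful: here the reversed monotonicity means I must check that the parts \emph{not} enlarged at step $t$ remain the minima for participants $1,\ldots,t-1$, which holds because only the currently-smallest parts get enlarged and they were already the ones those earlier participants were assigned. Maintain the index sets $K_i^t$ exactly as in Lemma~\ref{l2} with the three invariants (disjointness, $|K_j^t|\ge 2^{r-t}$ for $j\le t$, and ``indices in $K_j^t$ are gentleman-preferences of participant~$j$''), proved by the same induction; at step $r$ each participant has a distinct preference, and we assign $F_i$ accordingly and set $H=\sqcup_i F_i$.

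Finally I would verify the two conclusions: $H$ carries a gentleman's solution because $\mu_i(F_i)\le\mu_i(F_j)$ for all $i,j$ (invariant~3 at $t=r$), and $\mu_{l_1}(H)\ge\mu_{l_1}(F_1)\ge\frac{1}{2^{r-1}}\mu_{l_1}(S)$ — the latter because $F_1$ is one of the $2^{r-1}$ equal-$\mu_1$ pieces of $R^1$ that was \emph{never} selected for enlargement at any later step (participant~$1$'s preferences shrink in count but the retained one keeps its original $\mu_1$-measure $\frac{1}{2^{r-1}}\mu_1(R^1)=\frac{1}{r\,2^{r-1}}\mu_1(S)$ — and in fact one can keep a part whose $\mu_1$-measure is exactly $\frac{1}{2^{r-1}}\mu_1(S)$ if one splits $S$ rather than $R^1$; I would split $S$ itself into $2^{r-1}$ equal $\mu_1$-pieces inside the chosen region, which is cleaner).

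\textbf{Main obstacle.} The delicate point is the direction reversal: in Lemma~\ref{l2} one \emph{shrinks} parts to equalize, which automatically decreases all other measures and hence preserves earlier participants' preferences for free; here one must \emph{grow} parts, and growth in $\mu_t$ threatens to spoil the minimality already secured for participants $1,\ldots,t-1$ and could in principle make a non-selected part no longer minimal. The resolution — and the reason the auxiliary sets $R^1,\ldots,R^r$ and their ordering from Lemma~\ref{l4} are introduced — is that the enlargement material at step $t$ is pulled exclusively from the fresh reservoir $R^t$, disjoint from everything used before, and the bound $\mu_{l_j}(R^t)\le\frac1r\mu_{l_t}(S)$ controls the collateral increase; checking that these bounds are exactly enough to keep invariant~3 intact under enlargement is where the real work of the proof lies.
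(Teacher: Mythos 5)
Your construction is the same as the paper's (split $R^1$ into $2^{r-1}$ pieces of equal $\mu_1$-measure, at step $t$ enlarge the $2^{r-t}$ smallest-by-$\mu_t$ parts with fresh material from $R^t$, maintain the $K_i^t$ invariants), but the two places where you defer or improvise are exactly where the proof lives, and one of them you handle incorrectly. First, the feasibility of step $t$ is not automatic and you never check it: one must verify that the reservoir suffices, i.e.\ $\sum_{i=1}^{2^{r-t}}\mu_t(C_i^{t-1})\le\mu_t(R^t)=\frac1r\mu_t(S)$. The paper proves this by bounding the total deficit by $2^{r-t}\mu_t(A_{k_{2^{r-t}}}^{t-1})\le\frac{2^{r-t}}{2^{r-1}-2^{r-t}}\sum_{i>2^{r-t}}\mu_t(A_{k_i}^{t-1})$, then using that all current parts lie in $R^1\sqcup\cdots\sqcup R^{t-1}$, whose $\mu_t$-measure is at most $\frac{t-1}{r}\mu_t(S)$ by Lemma~\ref{l4}, and finally $\frac{t-1}{2^{t-1}-1}\le1$ for $t\ge2$. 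You name this as ``where the real work of the proof lies'' but do not do it, and you frame the danger as collateral damage to invariant~3 (which, as you and the paper both note, is harmless, since untouched parts only become relatively smaller); the genuine issue is the capacity of $R^t$.

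Second, your verification of the measure bound is wrong as stated: $\mu_1(F_1)=\frac1{2^{r-1}}\mu_1(R^1)=\frac1{r\,2^{r-1}}\mu_1(S)$, which falls short of the required $\frac1{2^{r-1}}\mu_1(S)$ by a factor of $r$, so $\mu_{l_1}(H)\ge\mu_{l_1}(F_1)$ does not suffice (and $H$ is not a superset of $R^1$, since only $r$ of the $2^{r-1}$ pieces of $R^1$ end up among the $F_j$, so your parenthetical remark about $H\supseteq R^1$ does not apply). The paper closes this gap without changing the construction: since the piece retained for participant~1 was never enlarged and is his gentleman preference, $\mu_1(F_j)\ge\mu_1(F_1)$ for every $j$, hence $\mu_1(H)=\sum_{j=1}^r\mu_1(F_j)\ge r\,\mu_1(F_1)=\frac1{2^{r-1}}\mu_1(S)$. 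Your proposed fix --- splitting $S$ itself into $2^{r-1}$ equal $\mu_1$-pieces --- would break the argument: the initial parts must be confined to $R^1$ so that the reservoirs $R^2,\ldots,R^r$ remain disjoint from them and so that the bound $\mu_t\bigl(\bigsqcup_i A_i^{t-1}\bigr)\le\frac{t-1}{r}\mu_t(S)$, essential in the capacity estimate above, continues to hold; pieces spread over all of $S$ would overlap the reservoirs and make both points fail.
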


\begin{proof}
To simplify the notation, we will enumerate the measures $\mu_i$, 
rather than $\mu_{l_i}$, 
remembering that the enumeration is not arbitrary, since it depends on the 
set that we are dividing.

First, we divide $R^1$ into $2^{r-1}$ parts equal in measure to $\mu_{1}$. 
Let us denote the resulting partition elements at the $t$-th step by 
$A_1^t,\ldots,A_{2^{r-1}}^t$, and denote the selected preferences of the $i$-th 
participant at the $t$-th step sets by $K_i^t$. 
At the first step we set $K_1^1:=\{1,\ldots,2^{r-1}\}$
(all $A_i^1$ are his preferences), and leave the remaining $K_i^1$ empty.

Next, at the $t$-th step, we do the following.

Order all sets $A_1^{t-1},\ldots,A_{2^{r-1}}^{t-1}$ according to 
the measure $\mu_t$ and consider $2^{r-t}$ smallest sets among them:
$$\mu_1(A_{k_1}^{t-1})\leq \mu_1(A_{k_2}^{t-1})\leq\ldots 
  \leq \mu_1(A_{k_{2^{r-t} }}^{t-1}).$$

We want to separate from $R^t$ such a collection of sets 
$C_1^{t-1},\ldots,C_{2^{r-t}}^{t-1}$ 
that $\forall i$
$$\mu_t(C_i^{t-1})=\mu_t(A_{k_{2^{r-t}}}^{t-1})-\mu_t(A_{k_i}^{t-1}) .$$

Then the new set $A_i^{t}$ is the ``adjusted'' set $A_i^{t-1}$ 
in which the elements $A_{k_i}^{t-1}$ are replaced with  
$A_{k_i}^{t-1}\bigsqcup C_i^{t-1}$.
The set of new selected preferences $K_i^{t}$ is such that
$K_t^{t}=\{ k_1,\ldots,k_{2^{r+1-t}}\}$ and 
$K_i^{t}=K_i^{ t-1}\setminus K_t^{t}$ for $i\neq t$
(in particular $K_i^t=\emptyset$ for $i>t$).

Note that we can make this step only if
$$\sum_{i=1}^{2^{r-t}} \mu_t(C_i^{t-1})\leq \mu_t(R^t).$$
Further, %
\bea{\sum_{i=1}^{2^{r-t}} \mu_t(C_i^{t-1}) 
        \a= \sum_{i=1}^{2^{r-t}} (\mu_t(A_{k_{2^{r-t}}}^{t-1})-\mu_t(A_{k_i}^{t-1}))
              \leq 2^{r-t}\cdot \mu_t(A_{k_{2^{r-t}}}^{t-1}) \\
        \a\leq \frac{2^{r-t}}{2^{r-1}-2^{r-t}}\cdot\sum_{i=2^{r-t}+1}^{2^{r-1}} \mu_t(A_{k_i}^{t-1}).}
Since all sets $A_i^{t-1}$ are contained in the union $R^1, \ldots,R^{t-1}$, then
$$
\mu_t\Biggl(\,\bigsqcup\limits_{i=1}^{2^{r-1}}
A_i^{t-1}\Biggr)\le\sum_{j=1}^{t-1}\mu_t(R^j) \le\frac{t-1}{r}\mu_t(S).
$$
The last inequality follows from the construction of $R^i$.

Thus,
$$
\sum_{i=2^{r-t}+1}^{2^{r-1}} \mu_t\bigl(A_{k_i}^{t-1}\bigr)
\le\mu_t\Biggl(\,\bigsqcup\limits_{i=1}^{2^{r-1}}
A_i^{t-1}\Biggr)\le\frac{t-1}{r}\mu_t(S).
$$
Since $\frac{t-1}{2^{t-1}-1}\leq 1$ for $t\geq 2$, then %
\bea{\frac{2^{r-t}}{2^{r-1}-2^{r-t}} \sum_{i=2^{r-t}+1}^{2^{r-1}} \mu_t(A_{k_i}^{t-1})
   \a\leq\frac{2^{r-t}}{2^{r-1}-2^{r-t}} \cdot \frac{t-1}{r}\cdot\mu_t(S) \\
   \a= \frac{t-1}{(2^{t-1}-1) r}\cdot\mu_t(S)
      \leq \frac{1}{r}\mu_t(S)=\mu_t(R^t) .}

Now, similarly to the previous considered situation, we prove 
by induction on $t$ the following statements:
\begin{enumerate}
\item The sets $K_i^t$ are disjunctive.

\item $|K_j^{t}|\geq 2^{r-t}$ for $j\leq t$.

And to prove the 3rd point we need the following reasoning.

\item For any $j$ and $k\in K_j^{t}$ it is true that $A_k^{t}$ is a preference 
for the $j$-th measure.
It is necessary to note that changing the parts is made by combining them,
which means that the parts do not decrease with respect to any measure and 
therefore cannot become preferences if they were not originally.
Moreover, by the construction, the set $K_t^t$ consists only of preferences of 
the $t$th measure.
\end{enumerate}

At the $r$th step, each participant will have at least $2^{r-r}=1$ different 
preferences. Let us associate each participant with his preference $F_i$. 
The first participant is associated with a set $F_1$ such that
$\mu_{1}(F_1)=1/2^{r-1} \mu_1(R^1)$, since by the construction we remove 
from the selected preferences of the 1st participant those that we have changed.

It should also be noted that $\mu_{1}(F_{1})\leq \mu_{1}(F_j)$ for any $j$,
since the original sets $A_i^1$ can only be enlarged. Therefore,  %
\bea{\mu_{1}(H) \a= \mu_{1}(\sqcup_{j=1}^{r}F_j)
                             =\sum_{j=1}^{r}\mu_{1}(F_j) \geq \sum_{j=1}^{r}\mu_{1}(F_{1}) \\
                          \a= r \cdot \mu_{1}(F_{1})= r \cdot 1/2^{r-1}\cdot \mu_{1}(R^1) \\
                          \a= r \cdot 1/2^{r-1}\cdot \frac{1}{r} \cdot \mu_{1}(S) 
                             = 2^{-(r-1)}\cdot \mu_{1}(S). }
Thus, the partition $H=\bigsqcup_{i=1}^{r}F_i$ satisfies all the required 
conditions. \qed
\end{proof}

\begin{theorem}
For non-atomic countably additive measures $\{\mu_i\}_{i=1}^r$ there 
exists a gentleman's division.
\end{theorem}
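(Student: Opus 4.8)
The claim is the gentleman's‑division analogue of Theorem~\ref{t2}, and the plan is to reproduce the construction of Section~5 verbatim, with Lemma~\ref{l3} and Lemma~\ref{l6} taking over the roles that Lemma~\ref{l2} and Lemma~\ref{l0} played in the measure case. First I would dispose of degenerate data: if $\mu_i(M)=0$ for some $i$ then the constraint $\mu_i(F_i)\le\mu_i(F_j)$ holds vacuously for that participant, so one may replace such a $\mu_i$ by a fixed non‑atomic measure of positive total mass (for instance $\mu=\frac1r\sum_j\mu_j$, or any $\mu_j$ with $\mu_j(M)>0$) without altering the set of gentleman's solutions of the original family; and if all $\mu_i(M)=0$ any partition works. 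So assume $\mu_i(M)>0$ for every $i$.

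Next I would record a \emph{prescribed‑index} strengthening of Lemma~\ref{l3}: for any measurable $S$ with $\prod_i\mu_i(S)\neq0$ and any fixed index $t$, the subset $H\subseteq S$ of Lemma~\ref{l3} can be obtained so that $t$ is the distinguished measure $l_1$ of the accompanying ordering from Lemma~\ref{l4}, whence $\mu_t(H)\ge 2^{-(r-1)}\mu_t(S)$ and $H$ carries a gentleman's solution. For this one runs the construction of Lemma~\ref{l4} with $\mu_t$ processed first, taking the initial set $R^1\subseteq S$ with $\mu_i(R^1)\le\frac1r\mu_i(S)$ for all $i$ and $\mu_t(R^1)=\frac1r\mu_t(S)$; such an $R^1$ exists because the range of the vector measure $(\mu_1,\dots,\mu_r)$ on the measurable subsets of $S$ is convex and contains both $0$ and $(\mu_1(S),\dots,\mu_r(S))$, hence the point $\frac1r(\mu_1(S),\dots,\mu_r(S))$. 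No subsequent step of that construction then performs a cut, so the last (indeed only) cut is the one made for $\mu_t$, i.e. $l_1=t$; the remainder of the proof of Lemma~\ref{l3} goes through unchanged.

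With this in hand the argument copies Section~5. Put $M_0:=M$; for $k\ge1$ set $t:=k\bmod r$ (and $t:=r$ when $r\mid k$) and let $H_k$ be the set furnished by the prescribed‑index lemma applied to $M\setminus\bigcup_{i<k}H_i$ with distinguished index $t$. Then $\mu_j(M_n)\le\mu_j(M_{n-1})$ for every $j$ (monotonicity of remainders), and in each block of $r$ consecutive steps the distinguished index runs through $1,\dots,r$, so $\mu_j\bigl(M\setminus\bigcup_{i\le sr}H_i\bigr)\le\bigl(\frac{2^{r-1}-1}{2^{r-1}}\bigr)^{s}\mu_j(M)$, exactly as in Corollary~\ref{c-1}. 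Hence the residue $M_\infty:=M\setminus\bigcup_{k\ge1}H_k$ is $\mu_j$‑null for every $j$; putting $H_0:=M_\infty$ (on which every partition is trivially a gentleman's solution) we obtain $M=\bigsqcup_{n\ge0}H_n$ with a gentleman's solution $\{F_{j,n}\}_{j=1}^r$ on each piece, so Lemma~\ref{l6} produces the partition $F_j:=\bigsqcup_{n\ge0}F_{j,n}$, which is a gentleman's solution on $M$ (this is what the proof of Lemma~\ref{l6} actually establishes). As in the measure case one also checks that each $F_j$ is the limit of the increasing sequence $\bigsqcup_{n\le kr}F_{j,n}$, so the construction is genuinely iterative and exponentially convergent. (One should note in passing that the hypothesis $\prod_i\mu_i\bigl(M\setminus\bigcup_{i<k}H_i\bigr)\neq0$ can fail only when a measure of the current remainder has already dropped to $0$, in which case that participant is simply dropped from the problem on that remainder, exactly as in the degenerate reduction above.)

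The only step requiring real care is the prescribed‑index strengthening of Lemma~\ref{l3}. As stated, Lemma~\ref{l3} returns whatever index $l_1$ the construction of Lemma~\ref{l4} happens to output, and if some index were distinguished only finitely often then the corresponding measure of the remainder need not contract and $M_\infty$ could fail to be null for it — so the whole scheme collapses without the ability to \emph{choose} which measure is reduced at each step. Pinning down $l_1$ forces one to select the auxiliary set $R^1$ in Lemma~\ref{l4} using the convexity of the range of the full vector measure $(\mu_1,\dots,\mu_r)$, a marginally stronger input than the scalar Sierpinski/Corollary~\ref{C1} invoked everywhere else; the proof should therefore either cite that vector convexity explicitly or supply a scalar substitute that still guarantees the prescribed measure is the last one cut in the construction of $R^1$. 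Everything else — the partition bookkeeping of Lemmas~\ref{l0} and~\ref{l6}, the geometric estimate, the convergence of the partial unions — is routine repetition of the measure case.
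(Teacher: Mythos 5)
Your proposal is correct, but it follows a genuinely different route from the paper's. The paper never prescribes which measure is reduced at a given step: it lets Lemma~\ref{l4} output whatever distinguished index it happens to produce, notes by pigeonhole that some index $\ell$ recurs infinitely often, concludes via Lemma~\ref{l5} only that $\mu_\ell(M_\infty)=0$, and then treats the residue $M_\infty$ by induction on the number of participants --- the remaining $r-1$ measures receive a gentleman's solution on $M_\infty$ by the induction hypothesis, participant $\ell$ is handled separately (a step that needs some care with the direction of the chore inequalities, since assigning $\emptyset$ to $\ell$ only works if the others' shares of $M_\infty$ are null for their own measures), and the pieces are glued by Lemma~\ref{l6}. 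You instead force the reduced measure at every step through a prescribed-index strengthening of Lemma~\ref{l3}, obtained by choosing $R^1\subseteq S$ with $\mu_i(R^1)=\frac1r\mu_i(S)$ for \emph{all} $i$ simultaneously; this requires Lyapunov's convexity theorem for the vector measure $(\mu_1,\dots,\mu_r)$, a strictly stronger and non-constructive input than the single-measure Corollary~\ref{C1}, which is the only cutting primitive the paper allows itself --- you flag this honestly, and you are right that no purely scalar reordering of the cuts in Lemma~\ref{l4} pins down $l_1$. What your route buys: the residue is null for \emph{every} measure, the induction on $r$ disappears, and the delicate extension step on $M_\infty$ is avoided entirely, with the contraction estimate literally copied from Corollary~\ref{c-1}; what the paper's route buys: it stays inside the scalar-cut framework underlying its claim of an explicit algorithm, at the price of the pigeonhole-plus-induction detour. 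Your diagnosis that, without control of the distinguished index, some measure might contract only finitely often is exactly the difficulty the paper circumvents by pigeonhole rather than by strengthening the lemma. One small correction of wording: when $\mu_j$ of a current remainder vanishes mid-construction, participant $j$ cannot literally be ``dropped'' there (the others would then envy the empty share in the chore sense); what is legitimate is the surrogate replacement you describe at the outset, since on such a remainder all of participant $j$'s inequalities are vacuous, so your parenthetical should invoke that replacement rather than dropping.
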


\begin{proof}
Let us prove by induction on the number of participants that there is 
a gentleman's division of $M$.

\textbf{Let us check the induction base $r=1$.}
For the 1st participant there is a single partition with $F_1=M$ and 
it is obviously gentlemanly.

\textbf{Let us check the induction step.}

\begin{definition} We call the set $H$ obtained in Lemma~\ref{l3} the 
$\mu_{l_1}$-satisfying subset of $S$.\end{definition}

Let us define by induction the following sequence of sets $H_i$:
\begin{itemize}
\item $H_1$ is the $\mu_{h_1}$-satisfying subset of $M=M_0$;
\item $H_k$ is the $\mu_{h_k}$-satisfying subset of 
      $M\setminus \bigcup_{i=1}^{k-1} H_i$; 
\item $M_{\infty}=M\setminus \bigsqcup_{i=1}^{\infty}H_i$.
\end{itemize}

All indices $h_1,h_2,\ldots$ belong to the finite set $\{1,\ldots,r\}$.
This means that one of them is repeated infinitely many times; 
denote it by $\ell$.
\end{proof}

\begin{lemma}\label{l5}
Let $k_1< k_2< \ldots$ be the set of all indices $i$ for which $h_i=\ell$. Then
$$
\mu_{\ell}\Biggl(M\setminus\bigsqcup\limits_{i=1}^{k_{i+1}} H_i\Biggr)
\le\left(\frac{2^{r-1}-1}{2^{r-1}}\right)^{i+1}\mu_{\ell}(M).
$$
\end{lemma}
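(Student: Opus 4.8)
The plan is to exploit the "satisfying subset" property established in Lemma~\ref{l3} together with the fact that the index $\ell$ is used infinitely often. Each time we form a $\mu_{\ell}$-satisfying subset of the current remainder, Lemma~\ref{l3} guarantees that we peel off at least a $2^{-(r-1)}$ fraction of the $\mu_{\ell}$-measure of that remainder. So the $\mu_{\ell}$-measure of the remainder is multiplied by at most $\frac{2^{r-1}-1}{2^{r-1}}$ at each such step. I would first set up notation: write $R_j := M\setminus\bigsqcup_{i=1}^{j} H_i$ for the remainder after $j$ rounds, so $R_0=M$ and $R_j\subset R_{j-1}$, whence $\mu_{\ell}(R_j)\le\mu_{\ell}(R_{j-1})$ for every $j$ since $\mu_\ell$ is a (nonnegative) measure.

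The core step is the following: at round $k_{m}$ (using the enumeration $k_1<k_2<\cdots$ of indices $i$ with $h_i=\ell$), the set $H_{k_m}$ is by definition the $\mu_{\ell}$-satisfying subset of $R_{k_m-1}$, so by Lemma~\ref{l3},
$$
\mu_{\ell}(H_{k_m})\ge\frac{1}{2^{r-1}}\,\mu_{\ell}(R_{k_m-1}).
$$
Since $H_{k_m}\subset R_{k_m-1}$ and $R_{k_m}=R_{k_m-1}\setminus H_{k_m}$ (no set with index $>k_m$ has been removed yet at this point), additivity gives
$$
\mu_{\ell}(R_{k_m})=\mu_{\ell}(R_{k_m-1})-\mu_{\ell}(H_{k_m})\le\left(1-\frac{1}{2^{r-1}}\right)\mu_{\ell}(R_{k_m-1})=\frac{2^{r-1}-1}{2^{r-1}}\,\mu_{\ell}(R_{k_m-1}).
$$

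Now I would chain these inequalities. Between consecutive special indices $k_m$ and $k_{m+1}$, the remainder only shrinks (in $\mu_\ell$-measure), so $\mu_{\ell}(R_{k_{m+1}-1})\le\mu_{\ell}(R_{k_m})$; combining with the displayed contraction at step $k_{m+1}$ yields $\mu_{\ell}(R_{k_{m+1}})\le\frac{2^{r-1}-1}{2^{r-1}}\mu_{\ell}(R_{k_m})$. A straightforward induction on $m$, with base case $\mu_{\ell}(R_{k_1})\le\frac{2^{r-1}-1}{2^{r-1}}\mu_{\ell}(R_0)=\frac{2^{r-1}-1}{2^{r-1}}\mu_{\ell}(M)$ (using $\mu_\ell(R_{k_1-1})\le\mu_\ell(M)$), then gives $\mu_{\ell}(R_{k_{m}})\le\left(\frac{2^{r-1}-1}{2^{r-1}}\right)^{m}\mu_{\ell}(M)$. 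Reading this at $m=i+1$ and recalling $R_{k_{i+1}}=M\setminus\bigsqcup_{j=1}^{k_{i+1}}H_j$ is exactly the claimed bound (modulo the harmless index collision between the running variable and the subscript in the statement).

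The only genuinely delicate point — and the step I would be most careful about — is verifying that at round $k_m$ the set removed really is $R_{k_m-1}\setminus R_{k_m}=H_{k_m}$ and not something larger, i.e. that the $H_i$ are genuinely disjoint and that Lemma~\ref{l3} is applicable at each stage (it requires $\prod_i\mu_i(R_{k_m-1})\ne 0$, or at least a nondegenerate version). If some remainder has $\mu_\ell$-measure zero the inequality holds trivially, and if it has zero measure for some other index the construction in Lemma~\ref{l3} must be interpreted appropriately; I would dispatch these degenerate cases first, then run the clean contraction argument above on the non-degenerate stretch. Everything else is the routine telescoping of a geometric bound.
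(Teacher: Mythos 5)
Your proposal is correct and follows essentially the same route as the paper: the one-step contraction $\mu_{\ell}(H_{k_{m}})\ge 2^{-(r-1)}\mu_{\ell}(R_{k_{m}-1})$ at each index with $h_{k_m}=\ell$, combined with monotonicity of the remainder between special steps, iterated to give the geometric bound. The paper reaches the same per-step inequality slightly more circuitously (by lower-bounding the $\mu_\ell$-mass of the whole block $H_{k_m+1},\ldots,H_{k_{m+1}}$ removed between consecutive special indices), so your monotonicity shortcut is a harmless streamlining, and your attention to the degenerate zero-measure cases is if anything more careful than the paper's.
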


\begin{proof} We have 
\begin{proof}
\bea{\mu_{\ell}(\sqcup_{i=k_{i}+1}^{k_{i+1}} H_i) 
       \a= \mu_{\ell}(H_{k_{i+1}}) + \mu_{\ell}(\cup_{i=k_i+1}^{k_{i+1}} H_i\setminus H_{k_{i+1}}) \\
       \a\geq 2^{-(r-1)}\cdot \mu_{\ell}(M\setminus \cup_{i=1}^{{k_{i+1}}-1} H_i) 
           + \mu_{\ell}(\cup_{i=k_i+1}^{k_{i+1}} H_i\setminus H_{k_{i+1}}) \\
       \a\geq 2^{-(r-1)}\cdot( \mu_{\ell}(M\setminus \cup_{i=1}^{{k_{i+1}}-1} H_i) 
           + \mu_{\ell}(\cup_{i=k_i+1}^{k_{i+1}} H_i\setminus H_{k_{i+1}})) \\
       \a\geq 2^{-(r-1)}\cdot\mu_{\ell}(M\setminus \sqcup_{1}^{k_{i}} H_i) .}%
Hence,
$$
\mu_{\ell}\Biggl(M\setminus\bigsqcup\limits_{i=1}^{k_{i+1}} H_i\Biggr)
\le\frac{2^{r-1}-1}{2^{r-1}}\mu_{\ell}\Biggl(M\setminus\bigsqcup\limits_{i=1}^{k_{i}}
H_i\Biggr).
$$
Finally we obtain 
$$
\mu_{\ell}\Biggl(M\setminus\bigsqcup\limits_{i=1}^{k_{i+1}} H_i\Biggr)
\le\left(\frac{2^{r-1}-1}{2^{r-1}}\right)^{i+ 1}\mu_{\ell}(M).
$$\qed
\end{proof}

\begin{corollary}
Letting the number of indices $i$ considered in Lemma~\ref{l5} tend to infinity, 
we obtain $\mu_{\ell}(M_\infty)=0$.
\end{corollary}

By the induction hypothesis, for measures with indices 
$i\in \{1,2\ldots r\}\setminus {\ell}$
there is a gentleman's solution $\mathcal{F}^{*}=\{F_i\}$ on $M_{\infty}$.
Then, for $F_{\ell}=\emptyset$, the partition 
$\mathcal{F}:=\{F_i\}_{i=1}^{r}$ of  $M_{\infty}$
is a gentleman's solution for all $r$ participants.

Indeed, since $\mathcal{F}^{*}$ is a gentleman's solution for measures
with indices $i\in \{1,2\ldots r\}\setminus {\ell}$, then we only need to 
check the inequalities containing the conditions associated with the 
$\ell$th participant:
$$\mu_i(F_i)\geq 0=\mu_i(\emptyset)=\mu_i(F_\ell),$$
$$\mu_\ell(F_\ell)=\mu_\ell(\emptyset)=0=\mu_\ell(M_{\infty})\geq \mu_\ell(F_i) .$$

We have obtained the representation 
$M=M_\infty\sqcup \Bigl(\bigsqcup\limits_{i=1}^\infty H_i\Bigr)$, 
and at each element of the partition there is a gentleman's solution. 
This means that the required partition exists by  Lemma~\ref{l6}. \qed
\end{proof}

At the request of an anonymous reviewer to present a simple, understandable 
example, in low dimension, for two participants, we will analyze this 
simplest situation in detail.
Note, firstly, that neither the dimension, nor geometry in the setting 
under consideration have any significance, 
and secondly, for two participants, a strong solution for charges can be 
obtained by the Cut-and-Choose method. However, perhaps a significantly 
more complex construction of the formal implementation of the algorithm described 
in Section~\ref{S:6} for $r=2$ will help to understand the general situation 
for $r>2$.

Given charges $\psi_1,\psi_2$, we construct two partitions of the space
$$M=M_1^+\sqcup M_1^-=M_2^+\sqcup M_2^-,$$
corresponding to positive and negative parts of the charges $\psi_1,\psi_2$.
We set
$$ M^{++}:=M_1^+ \cap M_2^+, ~ M^{+-}:=M_1^+ \cap M_2^-, 
   ~M^{-+}:=M_1^- \cap M_2^+, ~ M^{--}:=M_1^- \cap M_2^- .$$
Note that $\{M^{++},M^{+-},M^{-+},M^{--}\}$ is again a partition of $M$.
According to the construction of a strong solution for charges, the parts 
$M^{++}$ and $M^{--}$ are divided between the participants ``in half'', 
while the part $M^{+-}$ is given entirely to the 1st participant, and 
the part $M^{-+}$ is given entirely to the 2nd participant.

This completes the process of constructing the strong solution for 2 participants, 
since the need for an iterative scheme arises only for $r>2$.

\section{Conclusion}

In this paper, we have proposed a simple exponentially converging 
algorithm for constructing a strong solution to the fair division 
problem for the case of individual criteria described by non-atomic charges.
As was already noted in the Introduction, there are publications \cite{AM,DFHY} 
that describe finite algorithms for solving this problem for the cases
of strictly positive and strictly negative measures, respectively
(albeit with an astronomical estimate of the number of necessary operations).
Unfortunately, these publications are not written at a completely mathematical 
level of rigor (the proofs given in them are rather detailed descriptions 
of the program flow charts), and the algorithms themselves are so complex 
that checking the adequacy of their operation seems difficult.
Apparently, something like a computer-assisted roof is needed here.
In any case, we were unable to check them ``manually'' even for a small number 
of participants. However, if these algorithms really work, then their 
combination with the scheme described in Section~\ref{S:6} for dividing 
the space into positive and negative parts of the charges and working 
with their combinations lead to a finite algorithm for constructing 
a strong solution.

Note that despite the seeming similarity of the fair division problem under 
consideration with the better known setting of multicriteria Pareto 
optimization, they are fundamentally different. Pareto optimality is understood 
as a state of the system in which not a single criteria can be improved 
without deteriorating any other criteria. One can easily see that any 
Pareto optimal state is a strong solution to the fair division problem, 
but the latter setting is much more flexible and allows 
to explore a wider class of systems.

%\newpage

\end{document}